\documentclass[a4paper,12pt,reqno]{amsart}
\usepackage{amsfonts}
\usepackage{amsmath}
\usepackage{amssymb}
\usepackage[a4paper]{geometry}
\usepackage{mathrsfs}
\usepackage{csquotes}
\usepackage{enumitem}

\usepackage[colorlinks]{hyperref}
\renewcommand\eqref[1]{(\ref{#1})} %Need with hyperref
%
%
%%%%%%%%%%%%%%%%%%%%%%%%%
\setlength{\textwidth}{15.2cm}
\setlength{\textheight}{22.7cm}
\setlength{\topmargin}{0mm}
\setlength{\oddsidemargin}{3mm}
\setlength{\evensidemargin}{3mm}
\setlength{\footskip}{1cm}

%%%%%%%%%%%%%%%%%%%%%%%%%%%

\numberwithin{equation}{section}
\theoremstyle{plain}
\newtheorem{thm}{Theorem}[section]

\newtheorem{lem}[thm]{Lemma}
 
\theoremstyle{definition}
\newtheorem{defn}[thm]{Definition}

\newcommand{\Rn}{\mathbb R^{n}}
\newcommand{\G}{\mathbb G}

\def\R{\mathcal R}

\def\R{\mathcal R}

\def\e[#1]{{\textrm{e}}^{#1}}

\def\G{{\mathbb G}}

%%%%%%%%%%%%%%%%%%%%%%%%%%%%%

%   \documentclass{rmmcart}

\begin{document}

   \title[Very weak solutions to hypoelliptic wave equations]
 {Very weak solutions to hypoelliptic wave equations}

%    \title[Very weak solutions to wave equations on graded groups]{Very weak solutions to wave equations on graded groups}

\author[M. Ruzhansky]{Michael Ruzhansky}
\address{
  Michael Ruzhansky:
  \endgraf
  Department of Mathematics
  \endgraf
  Ghent University, Belgium
  \endgraf
  and
  \endgraf
  School of Mathematical Sciences
    \endgraf
    Queen Mary University of London
  \endgraf
  United Kingdom
  \endgraf
  {\it E-mail address} {\rm ruzhansky@gmail.com}
  }

\author[N. Yessirkegenov]{Nurgissa Yessirkegenov}
\address{
  Nurgissa Yessirkegenov:
  \endgraf
  Institute of Mathematics and Mathematical Modelling
  \endgraf
  125 Pushkin str.
  \endgraf
  050010 Almaty
  \endgraf
  Kazakhstan
  \endgraf
  and
  \endgraf
  Department of Mathematics
  \endgraf
  Imperial College London
  \endgraf
  180 Queen's Gate, London SW7 2AZ
  \endgraf
  United Kingdom
  \endgraf
  {\it E-mail address} {\rm n.yessirkegenov15@imperial.ac.uk}
  }

\thanks{The first author was supported in parts by the FWO Odysseus Project, by the EPSRC grant EP/R003025/1 and by the Leverhulme
Grant RPG-2017-151. The second author was supported by the MESRK grant AP05133271. No new data was collected
 or
generated during the course of research.}

     \keywords{wave equation, Rockland operator, graded Lie group, stratified group, Heisenberg group, Gevrey spaces}
     \subjclass[2010]{35L05, 35L30, 43A70}

     \begin{abstract} In this paper we study the Cauchy problem for the wave equations for hypoelliptic homogeneous
left-invariant operators on graded Lie groups when the time-dependent non-negative propagation speed
is regular, H\"{o}lder, and distributional. For H\"{o}lder coefficients we derive the well-posedness in the spaces of ultradistributions associated to Rockland operators on graded groups. In the case when the propagation speed is a distribution, we employ the notion of \enquote{very weak solutions} to the Cauchy problem, that was already successfully used in similar contexts in \cite{GR15} and \cite{RT17_phys_let}. We show that the Cauchy problem for the
wave equation with the distributional coefficient has a unique \enquote{very weak solution}
in an appropriate sense, which coincides with classical or distributional solutions
when the latter exist. Examples include the time dependent wave equation for the sub-Laplacian on the Heisenberg group or on
general stratified Lie groups, or $p$-evolution equations for higher order operators on $\Rn$ or on groups, the results already being new in all these cases.
     \end{abstract}
     \maketitle

       \tableofcontents

\section{Introduction}
\label{SEC:intro}
In this paper we are interested in the well-posedness of the following Cauchy problem for general positive hypoelliptic (Rockland operators of homogeneous degree $\nu$) left-invariant differential operators $\R$ on general graded Lie group $\G$ with the non-negative propagation speed $a=a(t)$ and with the source term $f=f(t)\in L^{2}(\G)$:
\begin{equation}\label{intro_eq1}
\left\{
                \begin{array}{ll}
                  \partial_{t}^{2}u(t)+a(t)\R u(t)=f(t), \;t\in [0,T],\\
                  u(0)=u_{0}\in L^{2}(\G),\\
                  \partial_{t}u(0)=u_{1}\in L^{2}(\G).
                \end{array}
              \right.
\end{equation}
When $\G=(\Rn,+)$ and $\R=-\triangle$ is the positive Laplacian, that is, the equation in \eqref{intro_eq1} is the usual wave equation, the well-posedness of the Cauchy problem \eqref{intro_eq1} for H\"{o}lder functions $a=a(t)$ goes back to Colombini, de Giorgi and Spagnolo \cite{CDS79}. In \cite{CJS87} and \cite{CS82}, it was also shown that when $\G=(\mathbb{R},+)$ and $\R=-\frac{d^{2}}{dx^{2}}$, the Cauchy problem \eqref{intro_eq1} does not have to be well-posed in $C^{\infty}$ if $a\in C^{\infty}$ is not strictly positive or if it is in the H\"{o}lder class $a\in C^{\alpha}$ for $0<\alpha<1$.

We note that following the seminal paper by Rothschild and Stein \cite{RS76}, such Rockland operators can be considered as model `approximations' of general hypoelliptic partial differential operators on manifolds.

Before discussing the obtained results on graded groups, let us briefly recall some necessary facts. The Sobolev space $H_{\R}^{s}(\G)$ for any $s\in\mathbb{R}$ is the subspace of $S^{\prime}(\G)$ obtained as the completion of the Schwartz space $S(\G)$ with respect to the Sobolev norm
\begin{equation}\label{Sob_norm}
\|f\|_{H_{\R}^{s}(\G)}:=\|(I+\R)^{\frac{s}{\nu}}f\|_{L^{2}(\G)}.
\end{equation}
In the case of stratified Lie groups such spaces and their properties have been extensively analysed by Folland in \cite{Fol75} and on general graded Lie groups they have been investigated in \cite{FR16} and \cite{FR17}. Recall that these spaces do not depend on a particular choice of the Rockland operator $\R$ used in the definition \eqref{Sob_norm}, see \cite[Theorem 4.4.20]{FR16}.

A brief review of the necessary notions related to graded Lie groups will be given in Section \ref{SEC:prelim}. We will also use $\R$-Gevrey (Roumieu) $\mathcal{G}_{\R}^{s}(\G)$ and $\R$-Gevrey (Beurling) type spaces $\mathcal{G}_{\R}^{(s)}(\G)$ for $s\geq1$, which are defined by
\begin{equation}\label{Gevrey_rou}
\mathcal{G}_{\R}^{s}(\G):=\{f\in C^{\infty}(\G)|\exists A>0:\|e^{A\R^{\frac{1}{2s}}}f\|_{L^{2}(\G)}<\infty\}
\end{equation}
and
\begin{equation}\label{Gevrey_beu}
\mathcal{G}_{\R}^{(s)}(\G):=\{f\in C^{\infty}(\G)|\forall A>0:\|e^{A\R^{\frac{1}{2s}}}f\|_{L^{2}(\G)}<\infty\},
\end{equation}
respectively.

Recently, in \cite{RT17a} (see also \cite[Theorem 3.1.1]{Tar18}), the following well-posedness result in the case of the homogeneous Cauchy problem \eqref{intro_eq1} (i.e. when $f\equiv0$) was obtained:
\begin{thm}[{\cite[Theorem 1.1]{RT17a} or \cite[Theorem 3.1.1]{Tar18}}]
\label{Ruzh_Tar_thm}
Let $\G$ be a graded Lie group and let $\R$ be a positive Rockland operator
of homogeneous degree $\nu$. Let $T>0$. Then we have
\begin{enumerate}[label=(\roman*)]
\item Let $a\in{\rm Lip}([0,T])$ with $a(t)\geq a_{0}>0$. Given $s\in\mathbb{R}$, if the initial Cauchy data $(u_{0},u_{1})$ are in $H_{\R}^{s+\frac{\nu}{2}}(\G)\times H_{\R}^{s}(\G)$, then there exists the unique solution of the homogeneous Cauchy problem \eqref{intro_eq1} (when $f\equiv 0$) in the space $C([0,T],H_{\R}^{s+\frac{\nu}{2}}(\G))\cap C^{1}([0,T],H_{\R}^{s}(\G))$, satisfying the following inequality for all values of $t\in [0,T]$:
    \begin{equation}\label{est_tar1}
    \|u(t,\cdot)\|^{2}_{ H^{s+\frac{\nu}{2}}_{\R}(\G)}+\|\partial_{t} u(t,\cdot)\|^{2}_{ H^{s}_{\R}(\G)}
\leq C(\|u_{0}\|^{2}_{H^{s+\frac{\nu}{2}}_{\R}(\G)}+\|u_{1}\|^{2}_{H^{s}_{\R}(\G)});
    \end{equation}
\item Let $a\in C^{\alpha}([0,T])$ with $0<\alpha<1$ and $a(t)\geq a_{0}>0$. If the initial Cauchy data  $(u_{0},u_{1})$ are in $\mathcal{G}_{\R}^{s}(\G)\times\mathcal{G}_{\R}^{s}(\G)$, then there exists the
unique solution of the homogeneous Cauchy problem \eqref{intro_eq1} (when $f\equiv 0$) in $C^{2}([0,T],\mathcal{G}_{\R}^{s}(\G))$, provided that
$$1\leq s<1+\frac{\alpha}{1-\alpha};$$
\item Let $a\in C^{\ell}([0,T])$ with $\ell\geq2$ and $a(t)\geq0$. If the initial Cauchy data  $(u_{0},u_{1})$ are in $\mathcal{G}_{\R}^{s}(\G)\times\mathcal{G}_{\R}^{s}(\G)$, then there exists the unique solution of the homogeneous Cauchy problem \eqref{intro_eq1} (when $f\equiv 0$) in $C^{2}([0,T],\mathcal{G}_{\R}^{s}(\G))$, provided that
$$1\leq s<1+\frac{\ell}{2};$$
\item Let $a\in C^{\alpha}([0,T])$ with $0<\alpha<2$ and $a(t)\geq 0$. If the initial Cauchy data  $(u_{0},u_{1})$ are in $\mathcal{G}_{\R}^{s}(\G)\times\mathcal{G}_{\R}^{s}(\G)$, then there exists the unique solution of the homogeneous Cauchy problem \eqref{intro_eq1} (when $f\equiv 0$) in $C^{2}([0,T],\mathcal{G}_{\R}^{s}(\G))$, provided that
$$1\leq s<1+\frac{\alpha}{2}.$$
\end{enumerate}
\end{thm}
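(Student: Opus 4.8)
The plan is to diagonalise \eqref{intro_eq1} by the spectral (equivalently, Plancherel / group Fourier) theory of $\R$, reducing it to a family of scalar oscillator equations, and then to run energy estimates that are uniform in the spectral parameter in the spirit of Colombini--De Giorgi--Spagnolo. Since $\R$ is positive and self-adjoint on $L^{2}(\G)$, I would pass to its multiplication-operator model, writing $\R=\int_{0}^{\infty}\lambda\,dE_{\lambda}$. Because $a=a(t)$ is scalar, the spectral projections $E_{\lambda}$ commute with $a(t)\R$ for every $t$, so the evolution preserves the spectral decomposition and, writing $v_{\lambda}(t)$ for the spectral density of $u(t)$ at level $\lambda$, the homogeneous problem ($f\equiv0$) becomes the decoupled family
\begin{equation}
\partial_{t}^{2}v_{\lambda}(t)+a(t)\,\lambda\,v_{\lambda}(t)=0,\qquad v_{\lambda}(0)\ \text{and}\ \partial_{t}v_{\lambda}(0)\ \text{prescribed},
\end{equation}
parametrised by $\lambda\in[0,\infty)$. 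The WKB frequency of this oscillator is $\sqrt{a(t)\lambda}\asymp\lambda^{1/2}$, so the natural dual scale is $\mu:=\lambda^{1/2}$, independent of $\nu$; this explains why the Gevrey thresholds below do not involve the homogeneous degree. All norms are recovered from $\|g\|_{H^{s}_{\R}(\G)}^{2}=\int_{0}^{\infty}(1+\lambda)^{2s/\nu}\,d\|E_{\lambda}g\|_{L^{2}}^{2}$ and $\|e^{A\R^{1/(2s)}}g\|_{L^{2}(\G)}^{2}=\int_{0}^{\infty}e^{2A\lambda^{1/(2s)}}\,d\|E_{\lambda}g\|_{L^{2}}^{2}$, so it suffices to bound $v_{\lambda}$ uniformly in $\lambda$ with the correct weights.

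For part (i) I would use the exact energy $E(t)=|\partial_{t}v_{\lambda}(t)|^{2}+a(t)\lambda|v_{\lambda}(t)|^{2}$. The equation gives $E'(t)=a'(t)\lambda|v_{\lambda}(t)|^{2}$, and since $a\geq a_{0}>0$ with $a\in\mathrm{Lip}([0,T])$ one gets $E'(t)\leq(\|a'\|_{L^{\infty}}/a_{0})E(t)$; Gr\"onwall yields $E(t)\leq e^{CT}E(0)$ with $C$ independent of $\lambda$, and integrating against $dE_{\lambda}$ with the weight $(1+\lambda)^{s/\nu}$ produces exactly \eqref{est_tar1}. For part (ii), $a'$ is no longer available, so I would mollify, $a_{\varepsilon}=a\ast\psi_{\varepsilon}$, using $|a-a_{\varepsilon}|\lesssim\varepsilon^{\alpha}$ and $|a_{\varepsilon}'|\lesssim\varepsilon^{\alpha-1}$, and run the approximate energy $E_{\varepsilon}(t)=|\partial_{t}v_{\lambda}|^{2}+a_{\varepsilon}(t)\lambda|v_{\lambda}|^{2}$. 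Estimating the two resulting terms with $a_{\varepsilon}\geq a_{0}$ and $|v_{\lambda}'|\,|v_{\lambda}|\leq\tfrac12(\mu^{-1}|v_{\lambda}'|^{2}+\mu|v_{\lambda}|^{2})$ gives $E_{\varepsilon}'(t)\lesssim(\varepsilon^{\alpha-1}+\mu\,\varepsilon^{\alpha})E_{\varepsilon}(t)$, and the balancing choice $\varepsilon=\mu^{-1}=\lambda^{-1/2}$ yields $E_{\varepsilon}'(t)\lesssim\lambda^{(1-\alpha)/2}E_{\varepsilon}(t)$, hence $E_{\varepsilon}(t)\lesssim\exp(CT\lambda^{(1-\alpha)/2})E_{\varepsilon}(0)$. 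This growth is absorbed by the Gevrey weight $e^{A\lambda^{1/(2s)}}$ precisely when $\tfrac{1-\alpha}{2}<\tfrac{1}{2s}$, i.e. $s<\tfrac{1}{1-\alpha}=1+\tfrac{\alpha}{1-\alpha}$, which is the asserted range.

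The cases (iii) and (iv), where $a(t)$ may vanish, are the main obstacle: then $E_{\varepsilon}$ degenerates and the cross term can no longer be absorbed as above. Here I would follow the weakly hyperbolic scheme of Colombini--De Giorgi--Spagnolo, regularising the mollified coefficient by adding a positive term comparable to the approximation error (of size $\varepsilon^{\ell}$ in case (iii), $\varepsilon^{\alpha}$ in case (iv)) to keep it bounded below, and controlling the logarithmic derivative $|a_{\varepsilon}'|/(a_{\varepsilon}+\varepsilon^{\ell})^{1/2}$ by Glaeser-type inequalities for non-negative functions (for $C^{2}$ data, $|a'|^{2}\leq2\|a''\|_{L^{\infty}}\,a$) together with their higher-order and H\"older analogues. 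Optimising the scale $\varepsilon=\varepsilon(\lambda)$ then produces energy growth of the form $\exp(CT\lambda^{1/(2+\ell)})$ in (iii) and $\exp(CT\lambda^{1/(2+\alpha)})$ in (iv); absorbing these by $e^{A\lambda^{1/(2s)}}$ demands $\tfrac{1}{2+\ell}<\tfrac{1}{2s}$ and $\tfrac{1}{2+\alpha}<\tfrac{1}{2s}$, which give the ranges $s<1+\tfrac{\ell}{2}$ and $s<1+\tfrac{\alpha}{2}$ respectively.

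In every case the uniform-in-$\lambda$ bounds together with dominated convergence in the spectral integral show that $u$ belongs to the asserted space and depends continuously (resp. twice continuously) on $t$, while uniqueness follows by applying the same energy estimates to the difference of two solutions. I expect the delicate point to be the degenerate regularised estimate in (iii)--(iv): one must choose the additive regulariser and the correct higher-order Glaeser inequality so that the $\varepsilon$-balance reproduces the sharp exponents $1/(2+\ell)$ and $1/(2+\alpha)$ rather than a lossy power, and one must check that the mollification can be carried out on $[0,T]$ (e.g. after a suitable extension of $a$) without destroying the non-negativity used in the Glaeser bounds.
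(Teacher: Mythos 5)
Your proposal is correct and follows essentially the same route as the paper and the cited source \cite{RT17a}: reduction to the decoupled family of scalar oscillators $v''+\beta^{2}a(t)v=0$ indexed by the spectral parameter (the paper realises this via the group Fourier transform and the eigenvalues $\pi_{m}^{2}$ of $\pi(\R)$ from \eqref{pi_R_matrix}, which is equivalent to your direct use of the spectral measure of $\R$ on $L^{2}(\G)$), followed by Colombini--De Giorgi--Spagnolo energy estimates uniform in that parameter, with symmetriser energy and Gronwall for (i), coefficient mollification balanced against the frequency for (ii), and the weakly hyperbolic quasi-symmetriser/Glaeser scheme for (iii)--(iv), yielding exactly the exponents $\lambda^{(1-\alpha)/2}$, $\lambda^{1/(2+\ell)}$, $\lambda^{1/(2+\alpha)}$ that the Gevrey weights $e^{A\lambda^{1/(2s)}}$ absorb in the stated ranges of $s$.
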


Let $H_{s}^{-\infty}$ and $H_{(s)}^{-\infty}$ be the spaces of linear continuous functionals on $\mathcal{G}_{\R}^{s}$ and $\mathcal{G}_{\R}^{(s)}$, respectively.

In particular, in this paper we show the inhomogeneous case of Theorem \ref{Ruzh_Tar_thm} and the case when the initial Cauchy data $(u_{0},u_{1})$ can be also from the space $H_{(s)}^{-\infty}$:
\begin{thm}\label{Ruzh_Tar_thm_inhom}
Let $\G$ be a graded Lie group and let $\R$ be a positive Rockland operator
of homogeneous degree $\nu$. Let $T>0$. Then we have
\begin{enumerate}[label=(\roman*)]
\item Let $a\in{\rm Lip}([0,T])$ with $a(t)\geq a_{0}>0$. Given $s\in\mathbb{R}$, if $f\in C([0, T ], H^{s}_{\R}(\G))$ and the initial Cauchy data $(u_{0},u_{1})$ are in $H_{\R}^{s+\frac{\nu}{2}}(\G)\times H_{\R}^{s}(\G)$, then there exists a unique solution of \eqref{intro_eq1} in the space $C([0,T],H_{\R}^{s+\frac{\nu}{2}}(\G))\cap C^{1}([0,T],H_{\R}^{s}(\G))$, satisfying the following inequality for all values of $t\in [0,T]$:
    \begin{multline}\label{est_tar1_inhom}
    \|u(t,\cdot)\|^{2}_{ H^{s+\frac{\nu}{2}}_{\R}(\G)}+\|\partial_{t} u(t,\cdot)\|^{2}_{ H^{s}_{\R}(\G)}\\
\leq C(\|u_{0}\|^{2}_{H^{s+\frac{\nu}{2}}_{\R}(\G)}+\|u_{1}\|^{2}_{H^{s}_{\R}(\G)}+\|f\|^{2}_{C([0,T],H^{s}_{\R}(\G))});
    \end{multline}
\item Let $a\in C^{\alpha}([0,T])$ with $0<\alpha<1$ and $a(t)\geq a_{0}>0$. Then for initial data and for source term
\begin{enumerate}[label=(\alph*)]
\item $u_{0},u_{1}\in \mathcal{G}_{\R}^{s}(\G)$, $f\in C([0, T ], \mathcal{G}_{\R}^{s}(\G)$,
\item $u_{0},u_{1}\in H_{(s)}^{-\infty}$, $f\in C([0, T ], H_{(s)}^{-\infty})$,
\end{enumerate}
the Cauchy problem \eqref{intro_eq1} has the unique solutions
\begin{enumerate}[label=(\alph*)]
\item $u\in C^{2}([0,T];\mathcal{G}_{\R}^{s}(\G))$,
\item $u\in C^{2}([0,T];H_{(s)}^{-\infty})$,
\end{enumerate}
respectively, provided that
$$1\leq s<1+\frac{\alpha}{1-\alpha};$$
\item Let $a\in C^{\ell}([0,T])$ with $\ell\geq2$ and $a(t)\geq0$. Then for initial data and for source term
\begin{enumerate}[label=(\alph*)]
\item $u_{0},u_{1}\in \mathcal{G}_{\R}^{s}(\G)$, $f\in C([0, T ], \mathcal{G}_{\R}^{s}(\G))$,
\item $u_{0},u_{1}\in H_{(s)}^{-\infty}$, $f\in C([0, T ], H_{(s)}^{-\infty})$,
\end{enumerate}
the Cauchy problem \eqref{intro_eq1} has the unique solutions
\begin{enumerate}[label=(\alph*)]
\item $u\in C^{2}([0,T];\mathcal{G}_{\R}^{s}(\G))$,
\item $u\in C^{2}([0,T];H_{(s)}^{-\infty})$,
\end{enumerate}
respectively, provided that
$$1\leq s<1+\frac{\ell}{2};$$
\item Let $a\in C^{\alpha}([0,T])$ with $0<\alpha<2$ and $a(t)\geq 0$. Then for initial data and for source term
\begin{enumerate}[label=(\alph*)]
\item $u_{0},u_{1}\in \mathcal{G}_{\R}^{s}(\G)$, $f\in C([0, T ], \mathcal{G}_{\R}^{s}(\G))$,
\item $u_{0},u_{1}\in H_{(s)}^{-\infty}$, $f\in C([0, T ], H_{(s)}^{-\infty})$,
\end{enumerate}
the Cauchy problem \eqref{intro_eq1} has the unique solutions
\begin{enumerate}[label=(\alph*)]
\item $u\in C^{2}([0,T];\mathcal{G}_{\R}^{s}(\G))$,
\item $u\in C^{2}([0,T];H_{(s)}^{-\infty})$,
\end{enumerate}
respectively, provided that
$$1\leq s<1+\frac{\alpha}{2}.$$
\end{enumerate}
\end{thm}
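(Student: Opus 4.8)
The plan is to diagonalize the problem using the group Fourier transform associated with $\G$ and the spectral calculus of the positive Rockland operator $\R$. By the Plancherel theorem, for each $\pi$ in the unitary dual $\widehat{\G}$ the symbol $\pi(\R)$ is a positive self-adjoint operator on $\mathcal H_\pi$, and applying the Fourier transform to \eqref{intro_eq1} turns the equation into the operator-valued ODE $\partial_t^2 \widehat u(t,\pi) + a(t)\,\pi(\R)\,\widehat u(t,\pi) = \widehat f(t,\pi)$. Decomposing $\pi(\R)$ spectrally reduces this further to a scalar family $\partial_t^2 v + a(t)\lambda v = g$, $v(0)=v_0$, $\partial_t v(0)=v_1$, parametrized by $\lambda \geq 0$; under this reduction the norms $\|\cdot\|_{H^s_\R}$ become the weights $(1+\lambda)^{s/\nu}$ and the Gevrey norms in \eqref{Gevrey_rou} become the exponential weights $e^{A\lambda^{1/(2s)}}$ in the $L^2$/Plancherel integral over the spectrum. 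Everything then comes down to estimating this scalar ODE with weights matched to each function space, and integrating over the spectral variable.

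For part (i) I would run a direct energy argument on the reduced ODE. Setting $E(t,\lambda) = |\partial_t v|^2 + a(t)\lambda |v|^2 + |v|^2$ and differentiating, the terms involving $\partial_t^2 v$ cancel against $a(t)\lambda\, v$ using the equation, leaving $E'(t,\lambda) = a'(t)\lambda|v|^2 + 2\,\mathrm{Re}(g\,\overline{\partial_t v}) + 2\,\mathrm{Re}(v\,\overline{\partial_t v})$. Since $a$ is Lipschitz and $a\ge a_0>0$, the first term is bounded by $\tfrac{|a'|}{a_0}\,a\lambda|v|^2 \le C\,E$, while Young's inequality controls the remaining terms by $C E + |g|^2$. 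Gronwall's inequality then yields $E(t,\lambda) \le e^{Ct}\big(E(0,\lambda) + \int_0^t |g(\tau,\lambda)|^2\,d\tau\big)$, and multiplying by $(1+\lambda)^{2s/\nu}$ and integrating over the spectrum against the Plancherel measure gives exactly \eqref{est_tar1_inhom}. Uniqueness follows by applying the same estimate to the difference of two solutions with zero data and zero source.

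For parts (ii)--(iv), case (a), I would build on the homogeneous estimates of Theorem \ref{Ruzh_Tar_thm}, whose essential output is that the homogeneous propagator grows no faster than $e^{C\lambda^{1/(2s)}}$ on the spectral side in the admissible Gevrey range. Since $a$ is only H\"older or finitely differentiable, these bounds are obtained by the Colombini--De Giorgi--Spagnolo scheme: regularize $a$ by convolution with a parameter $\varepsilon=\varepsilon(\lambda)$ tuned to the frequency, form an approximate energy, and balance the regularization error against the energy growth, which is what fixes the ranges $s<1+\tfrac{\alpha}{1-\alpha}$, $s<1+\tfrac{\ell}{2}$, $s<1+\tfrac{\alpha}{2}$. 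To incorporate the source term I would either carry $g$ through the same Gronwall step as in part (i) (now with the frequency-dependent constants), or equivalently use Duhamel's principle: the propagator $W(t,\tau)$ solving the homogeneous equation from time $\tau$ satisfies the same $e^{C\lambda^{1/(2s)}}$ bounds uniformly for $0\le\tau\le t\le T$, so the particular solution $\int_0^t W(t,\tau)g(\tau)\,d\tau$ inherits the Gevrey estimate after integration in $\tau$. With $f\in C([0,T],\mathcal G_\R^s(\G))$ this keeps the solution in $C^2([0,T];\mathcal G_\R^s(\G))$.

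Finally, for case (b) I would pass to the dual picture. On the spectral side, $H_{(s)}^{-\infty}$, the dual of the Beurling space $\mathcal G_\R^{(s)}$ (defined by $e^{A\lambda^{1/(2s)}}$-finiteness for \emph{all} $A$), is characterized by $e^{-A\lambda^{1/(2s)}}$-finiteness for \emph{some} $A>0$. The point is that the very same growth bound $e^{C\lambda^{1/(2s)}}$ that keeps Roumieu data in the Roumieu space (by shrinking $A$) also keeps dual data in the dual space by \emph{enlarging} the weight: if $\int e^{-2A\lambda^{1/(2s)}}|\text{data}|^2\,d\lambda<\infty$, then $\int e^{-2(A+C_T)\lambda^{1/(2s)}}|\text{sol}|^2\,d\lambda \le \int e^{-2A\lambda^{1/(2s)}}|\text{data}|^2\,d\lambda<\infty$, where $C_T$ is the uniform growth constant on $[0,T]$; the same absorption applied to the Duhamel integral handles $f\in C([0,T],H_{(s)}^{-\infty})$. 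Continuity in $t$ follows from continuous dependence of the ODE solutions on the spectral parameter together with dominated convergence against the integrable weight $e^{-2(A+C_T)\lambda^{1/(2s)}}$, placing the solution in $C^2([0,T];H_{(s)}^{-\infty})$; uniqueness and the coincidence with classical or distributional solutions follow from linearity and the fact that the spectral construction reproduces the distributional pairing. I expect the main obstacle to be precisely the bookkeeping in case (b): correctly identifying $H_{(s)}^{-\infty}$ through the dual exponential weights and verifying that the growth constant can be absorbed \emph{uniformly} on $[0,T]$ while maintaining $t$-continuity, since the degenerate cases $a\ge0$ in (iii)--(iv) make the frequency-dependent constants delicate.
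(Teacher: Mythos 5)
Your proposal follows essentially the same route as the paper: taking the group Fourier transform, using the discreteness of the spectrum of $\pi(\R)$ to reduce to a scalar ODE family, running a symmetriser/energy argument with Gronwall for part (i) and integrating against the Plancherel measure, invoking the homogeneous Gevrey estimates of Theorem \ref{Ruzh_Tar_thm} (with the source carried through Gronwall/Duhamel) for the (a) cases, and characterising $H_{(s)}^{-\infty}$ by the dual exponential weights $e^{\eta\pi_m^{1/s}}$ so that the propagator's $e^{K'\pi_m^{1/s}}$ growth is absorbed by enlarging the weight for the (b) cases. This matches the paper's proof (Lemma \ref{lem_useful}, the estimate \eqref{Tar_for1}, and the energy computation in Section \ref{SEC:appendix}) in all essentials.
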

Since we are also interested in the case when the time-dependent propagation speed $a$ is less regular than H\"{o}lder, let us recall some results in this direction. In \cite{GR15}, the authors introduced the notion of \enquote{very weak solutions} for a wave-type second order invariant partial differential operator in $\Rn$, and proved their existence, uniqueness and consistency with classical or distributional solutions should the latter exist. For similar results in $\Rn$, we also refer to \cite{RT17_phys_let} for the Landau Hamiltonian, and to \cite{RT17_arch} for operators with a discrete non-negative spectrum. Thus, the second aim of this paper is to carry out similar investigations for general hypoelliptic operators, namely for positive Rockland operators \eqref{intro_eq1}, whose spectrum is absolutely continuous. To give some examples, this setting includes:
\begin{itemize}
\item for $\mathbb{G}=\mathbb{R}^{n}$, $\mathcal{R}$ may be any positive homogeneous elliptic differential operator with constant coefficients. For example, we can take
    $$\mathcal{R}=(-\triangle)^{m} \;\;{\rm or}\;\;\mathcal{R}=(-1)^{m}\sum_{j=1}^{n}a_{j}\left(\frac{\partial}{\partial x_{j}}\right)^{2m},$$
    where $a_{j}>0$ and $m\in\mathbb{N}$;
\item for the Heisenberg group $\mathbb{G}=\mathbb{H}_{n}$, we can take
$$\mathcal{R}=(-\mathcal{L})^{m}\;{\rm or}\;\mathcal{R}=(-1)^{m}\sum_{j=1}^{n}(a_{j}X_{j}^{2m}+b_{j}Y_{j}^{2m}),$$
where $a_{j}, b_{j}>0, m\in \mathbb{N}$, and $\mathcal{L}=\sum_{j=1}^{n}(X_{j}^{2}+Y_{j}^{2})$ is the sub-Laplacian, and $$X_{j}:=\partial_{x_{j}}-\frac{y_{j}}{2}\partial_{t}, \;Y_{j}:=\partial_{y_{j}}+\frac{x_{j}}{2}\partial_{t};$$
\item for any stratified Lie group (or homogeneous Carnot group) with vectors $X_{1},\ldots, X_{k}$ spanning the first stratum, we can take
    $$\mathcal{R}=(-1)^{m}\sum_{j=1}^{k}a_{j}X_{j}^{2m}, \;a_{j}>0,$$
    so that in particular, for $m=1$, $\mathcal{R}$ is a positive sub-Laplacian;
    \item for any graded Lie group $\mathbb{G}\sim\mathbb{R}^{n}$ with dilation weights $\nu_{1},\ldots,\nu_{n}$ let us fix the basis $X_{1},\ldots,X_{n}$ of the Lie algebra $\mathfrak{g}$ of $\mathbb{G}$ satisfying
        $$D_{r}X_{j}=r^{\nu_{j}}X_{j}, \;j=1,\ldots,n, \;r>0,$$
        where $D_{r}$ denote dilations on the Lie algebra. If $\nu_{0}$ is any common multiple of $\nu_{1},\ldots,\nu_{n}$, the operator
        $$\mathcal{R}=\sum_{j=1}^{n}(-1)^{\frac{\nu_{0}}{\nu_{j}}}a_{j}X_{j}^{2\frac{\nu_{0}}{\nu_{j}}},
        a_{j}>0$$
        is a Rockland operator of homogeneous degree $2\nu_{0}$.
\end{itemize}
Now we shall describe the notion of very weak solutions and formulate the corresponding results for distributions
$a\in \mathcal{D}^{\prime}([0,T])$ and $f\in \mathcal{D}^{\prime}([0,T]) \bar{\bigotimes} H_{\R}^{-\infty}$. First, we regularise the distributional coefficient $a$ and the source term $f$ by the convolution with a suitable mollifier $\psi$ obtaining families
of smooth functions $(a_{\varepsilon})_{\varepsilon}$ and $(f_{\varepsilon})_{\varepsilon}$ as follows
\begin{equation}\label{ae_fe}
a_{\varepsilon}=a\ast\psi_{\omega(\varepsilon)},\;\;f_{\varepsilon}=f(\cdot)\ast\psi_{\omega(\varepsilon)},
\end{equation}
with $$\psi_{\omega(\varepsilon)}(t)=(\omega(\varepsilon))^{-1}\psi(t/\omega(\varepsilon)),$$
where $\omega(\varepsilon)>0$ (which we will choose later) is such that $\omega(\varepsilon)\rightarrow0$ as $\varepsilon\rightarrow0$, and $\psi$ is a Friedrichs-mollifier, that is,
$$\psi\in C_{0}^{\infty}(\mathbb{R}),\;\;\psi\geq 0\;\;\text{and}\;\;\int \psi=1.$$

Let us give the following definition:
\begin{defn}\label{moderate_def}
\begin{enumerate}[label=(\roman*)]
\item A net of functions $(f_{\varepsilon})_{\varepsilon}\in C^{\infty}(\mathbb{R})^{(0,1]}$ is said to be $C^{\infty}$-
moderate if for all $K\Subset \mathbb{R}$ and for all $\alpha\in \mathbb{N}_{0}$ there exist $N\in \mathbb{N}_{0}$ and $c>0$ such that
$$\sup_{t\in K}|\partial^{\alpha}f_{\varepsilon}(t)|\leq c\varepsilon^{-N-\alpha},$$
for all $\varepsilon \in (0,1]$, where $K\Subset \mathbb{R}$ means that $K$ is a compact set in $\mathbb{R}$.

\item A net of functions $(u_{\varepsilon})_{\varepsilon}\in C^{\infty}([0,T];H_{\R}^{s})^{(0,1]}$ is $C^{\infty}([0,T];H_{\R}^{s})$-moderate if there exist $N\in \mathbb{N}_{0}$ and $c_{k}>0$ for all $k\in\mathbb{N}_{0}$ such that
$$\|\partial_{t}^{k}u_{\varepsilon}(t,\cdot)\|_{H^{s}_{\R}}\leq c_{k}\varepsilon^{-N-k},$$
for all $t\in [0,T]$ and $\varepsilon \in (0,1]$.
\item We say that a net of functions $(u_{\varepsilon})_{\varepsilon}\in C^{\infty}([0,T];H_{(s)}^{-\infty})^{(0,1]}$ is $C^{\infty}([0,T];H_{(s)}^{-\infty})$-moderate if there exists $\eta>0$ and, for all $p\in \mathbb{N}_{0}$ there exists $c_{p}>0$ and $N_{p}>0$ such that
    $$\|e^{-\eta \R^{\frac{1}{2s}}}\partial_{t}^{p}u_{\varepsilon}(t,\cdot)\|_{L^{2}(\G)}\leq c_{p}\varepsilon ^{-N_{p}-p},$$
    for all $t\in [0,T]$ and $\varepsilon\in(0,1]$.
\end{enumerate}
\end{defn}
It turns out that if e.g. $\omega(\varepsilon)=\varepsilon$, then the net $(a_{\varepsilon})_{\varepsilon}$ in \eqref{ae_fe} is $C^{\infty}$-moderate. Note that the conditions of moderateness are natural in the sense that regularisations of distributions are moderate, namely by the structure theorems for distributions one can regard
\begin{equation}\label{com_sup_mod}
\text{compactly supported distributions}\;\;\mathcal{E}^{\prime}(\mathbb{R})\subset\{C^{\infty}\text{-moderate families}\}.
\end{equation}
Thus, by \eqref{com_sup_mod} we see that while a solution to the Cauchy problems may not exist in the space of
distributions $\mathcal{E}^{\prime}(\mathbb{R})$, it may still exist (in a certain appropriate sense) in the space on the right hand side of \eqref{com_sup_mod}. The moderateness assumption allows us to recapture the solution as in \eqref{est_tar1_inhom} when it exists. However, we note that regularisation with standard Friedrichs mollifiers is not always sufficient, hence the introduction of a family $\omega(\varepsilon)$ in the above regularisations.

Now let us introduce a notion of a \enquote{very weak solution} for the Cauchy problem
\begin{equation}\label{weak_prob_state}
\left\{
                \begin{array}{ll}
                  \partial_{t}^{2}u(t)+a(t)\R u(t)=f(t), \;t\in [0,T],\\
                  u(0)=u_{0}\in L^{2}(\G),\\
                  \partial_{t}u(0)=u_{1}\in L^{2}(\G).
                \end{array}
              \right.
\end{equation}
\begin{defn}\label{veryweak_def} Let $s$ be a real number.
\begin{enumerate}[label=(\roman*)] \item We say that the net $(u_{\varepsilon})_{\varepsilon}\subset C^{\infty}([0,T];H_{\R}^{s})$ is a very weak solution of $H^{s}$-type of the Cauchy problem \eqref{weak_prob_state} if there exist

$C^{\infty}$-moderate regularisation $a_{\varepsilon}$ of the coefficient $a$,

$C^{\infty}([0,T];H_{\R}^{s})$-moderate regularisation $f_{\varepsilon}(t)$ of $f(t)$, such that $(u_{\varepsilon})_{\varepsilon}$ solves the following regularised problem
\begin{equation}\label{epsil_weak_prob_state}
\left\{
                \begin{array}{ll}
                  \partial_{t}^{2}u_{\varepsilon}(t)+a_{\varepsilon}(t)\R u_{\varepsilon}(t)=f_{\varepsilon}(t), \;t\in [0,T],\\
                  u_{\varepsilon}(0)=u_{0}\in L^{2}(\G),\\
                  \partial_{t}u_{\varepsilon}(0)=u_{1}\in L^{2}(\G),
                \end{array}
              \right.
\end{equation}
for all $\varepsilon \in (0,1]$, and is $C^{\infty}([0,T];H_{\R}^{s})$-moderate.
\item The net $(u_{\varepsilon})_{\varepsilon}\subset C^{\infty}([0,T];H^{-\infty}_{(s)})$ is a very weak solution of $H_{(s)}^{-\infty}$-type of the Cauchy problem \eqref{weak_prob_state} if there exist

$C^{\infty}$-moderate regularisation $a_{\varepsilon}$ of the coefficient $a$,

$C^{\infty}([0,T];H_{(s)}^{-\infty})$-moderate regularisation $f_{\varepsilon}(t)$ of $f(t)$, such that $(u_{\varepsilon})_{\varepsilon}$ solves the regularised problem \eqref{epsil_weak_prob_state} for all $\varepsilon\in (0,1]$, and is $C^{\infty}([0,T];H_{(s)}^{-\infty})$-moderate.
\end{enumerate}
\end{defn}
Note that by Theorem \ref{Ruzh_Tar_thm_inhom} (i), we know that the Cauchy problem \eqref{epsil_weak_prob_state} has
a unique solution satisfying estimate \eqref{est_tar1_inhom}.

As usual, $a$ is a nonnegative distribution means that there exists a constant $a_{0}>0$ such
that $a\geq a_{0}>0$, while $a\geq a_{0}$ means that $a-a_{0}\geq 0$, i.e. $\langle a-a_{0}, \psi\rangle\geq 0$ for all nonnegative $\psi\in C_{0}^{\infty}(\mathbb{R})$. It can be remarked that it follows then that $a$ is actually a positive measure, although we will not need to make use of this fact in our analysis.

Thus, let us formulate the result of the paper on the existence of very weak solutions of the Cauchy problem \eqref{weak_prob_state}.
\begin{thm} \label{exist_weak} (Existence) Let $\G$ be a graded Lie group and let $\R$ be a positive Rockland operator
of homogeneous degree $\nu$. Let $T>0$ and $s\in \mathbb{R}$.
\begin{enumerate}[label=(\roman*)] \item
Let $a=a(t)$ be a positive distribution with compact support included in $[0,T]$, such that $a\geq a_{0}$ for some constant $a_{0}>0$. Let $u_{0}, u_{1}\in H_{\R}^{s}$ and $f\in \mathcal{D}^{\prime}([0,T])\bar{\bigotimes}H_{\R}^{s}$. Then the Cauchy problem \eqref{weak_prob_state} has a very weak solution of $H^{s}$-type.
\item Let $a=a(t)$ be a nonnegative distribution with compact support included in $[0,T]$, such that $a\geq 0$. Let $u_{0}, u_{1}\in H_{(s)}^{-\infty}$ and $f\in \mathcal{D}^{\prime}([0,T])\bar{\bigotimes}H_{(s)}^{-\infty}$. Then the Cauchy problem \eqref{weak_prob_state} has a very weak solution of $H_{(s)}^{-\infty}$-type.
\end{enumerate}
\end{thm}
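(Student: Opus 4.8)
The plan is to prove existence of very weak solutions by directly constructing them from the regularised problem \eqref{epsil_weak_prob_state} and verifying moderateness of the resulting net. The key observation is that the regularised coefficients $a_\varepsilon = a \ast \psi_{\omega(\varepsilon)}$ are smooth and satisfy the hypotheses of Theorem \ref{Ruzh_Tar_thm_inhom}(i) for each fixed $\varepsilon$, so the solution net $(u_\varepsilon)_\varepsilon$ exists and is unique; the entire content lies in controlling how the constants in \eqref{est_tar1_inhom} and its higher-order analogues degenerate as $\varepsilon \to 0$.

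First I would record the needed quantitative bounds on $a_\varepsilon$. Since $a$ is a compactly supported distribution and, by the discussion around \eqref{com_sup_mod}, its mollification is $C^\infty$-moderate, for every $\alpha$ there exist $N$ and $c$ with $\sup_t |\partial_t^\alpha a_\varepsilon(t)| \le c\,\varepsilon^{-N-\alpha}$ once $\omega(\varepsilon)=\varepsilon$; in particular $a_\varepsilon$ is bounded above and, using $a\ge a_0>0$ in case (i), bounded below by $a_0>0$ uniformly in $\varepsilon$. In case (ii), where only $a\ge0$ is available, I would instead keep $a_\varepsilon\ge0$ and compensate by shifting to the Gevrey/ultradistribution scale, exactly as in parts (ii)–(iv) of Theorem \ref{Ruzh_Tar_thm_inhom}. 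The second preparatory step is the energy argument: taking the group Fourier transform reduces \eqref{epsil_weak_prob_state} to a parameter-dependent second-order ODE in $t$ for each spectral point, and the standard energy functional (of the form $E_\varepsilon(t)=\|\partial_t u_\varepsilon\|^2 + a_\varepsilon(t)\|\R^{1/2}u_\varepsilon\|^2$, possibly with the $a_0$-regularisation) yields Gronwall-type estimates whose constants involve $\|a_\varepsilon'\|_\infty / a_0$ and $\|f_\varepsilon\|$.

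The main work, and the place where the obstacle lies, is bootstrapping from the zeroth-order energy estimate to bounds on all time-derivatives $\partial_t^k u_\varepsilon$ with the polynomial-in-$\varepsilon^{-1}$ growth required by Definition \ref{moderate_def}(ii)–(iii). I would differentiate the equation $k$ times in $t$, treating $\partial_t^k u_\varepsilon$ as solving an equation of the same type with a source built from the lower derivatives $\partial_t^j u_\varepsilon$ ($j<k$) against derivatives $\partial_t^{k-j} a_\varepsilon$; an induction on $k$ then closes, since each $\partial_t^{k-j}a_\varepsilon$ contributes a factor $\varepsilon^{-N-(k-j)}$ and each lower-order norm contributes $\varepsilon^{-N'-j}$, so the product stays of the form $c_k\,\varepsilon^{-N_k-k}$. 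The delicate point is that the exponent $N$ and the loss from dividing by $a_0$ must not accumulate uncontrollably across the induction; for case (ii) the fix is precisely the Gevrey shift, where one absorbs the derivative losses into the exponential weight $e^{-\eta\R^{1/2s}}$ by exploiting the admissible range $1\le s<1+\tfrac{\alpha}{1-\alpha}$ (respectively $1+\tfrac{\ell}{2}$, $1+\tfrac{\alpha}{2}$) that already appears in Theorem \ref{Ruzh_Tar_thm_inhom}, applied now to the smooth net $a_\varepsilon$ whose H\"older/$C^\ell$ seminorms grow controllably in $\varepsilon$.

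Finally I would assemble the pieces: the net $(u_\varepsilon)_\varepsilon$ solves \eqref{epsil_weak_prob_state} for every $\varepsilon\in(0,1]$ by Theorem \ref{Ruzh_Tar_thm_inhom}(i), the regularisations $a_\varepsilon$ and $f_\varepsilon$ are moderate by construction, and the derivative estimates just established show $(u_\varepsilon)_\varepsilon$ is $C^\infty([0,T];H_\R^s)$-moderate in case (i) and $C^\infty([0,T];H_{(s)}^{-\infty})$-moderate in case (ii). By Definition \ref{veryweak_def} this is exactly a very weak solution of the respective type, completing the proof.
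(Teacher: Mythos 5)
Your overall architecture (mollify $a$ and $f$, solve each regularised problem via Theorem \ref{Ruzh_Tar_thm_inhom}, then prove moderateness by an energy estimate plus bootstrapping through the equation $\partial_t^2 u_\varepsilon=-a_\varepsilon\R u_\varepsilon+f_\varepsilon$) is the same as the paper's. But there is a genuine gap at the first quantitative step: you fix $\omega(\varepsilon)=\varepsilon$ and then assert that the Gronwall constants, which \enquote{involve $\|a_\varepsilon'\|_\infty/a_0$}, stay under control. They do not: the energy inequality has the form $\partial_t E_\varepsilon\le C\|\partial_t S_\varepsilon\|E_\varepsilon$, so Gronwall produces the factor $\exp\bigl(cT\sup_t|\partial_t a_\varepsilon(t)|\bigr)$. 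By the structure theorem, for a genuinely singular $a$ (say $a=a_0+\delta_{t_0}$) the best available bound is $|\partial_t a_\varepsilon(t)|\le c(\omega(\varepsilon))^{-L-1}$, so with $\omega(\varepsilon)=\varepsilon$ the zeroth-order estimate already carries $\exp(cT\varepsilon^{-L-1})$, which is not $O(\varepsilon^{-N})$ for any $N$; moderateness of $(u_\varepsilon)_\varepsilon$ fails at order zero and no induction on $k$ can repair it. This is precisely why the paper keeps $\omega(\varepsilon)$ free and only at the end chooses it logarithmically, $(\omega(\varepsilon))^{-L-1}\approx\log(1/\varepsilon)$, so that $\exp(c(\omega(\varepsilon))^{-L-1}T)\approx\varepsilon^{-cT}$ becomes polynomial --- the point flagged in the introduction by the remark that regularisation with standard Friedrichs mollifiers is not always sufficient.

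The same issue recurs, in sharper form, in your case (ii): there the spectrally localised estimate carries the factor $e^{K'(\omega(\varepsilon))^{-L-1}\pi_m^{1/s}}$, and one must take $\omega^{-1}(\varepsilon)\approx(\log\varepsilon)^{r}$ for a suitable $r$ so that this exponential can be absorbed into the weight $e^{-\eta\R^{1/(2s)}}$ uniformly in $\pi$; with $\omega(\varepsilon)=\varepsilon$ the exponent grows super-polynomially in $\varepsilon^{-1}$ and the net is not $C^\infty([0,T];H_{(s)}^{-\infty})$-moderate. Your bootstrap for the higher time derivatives is correct in outline, and it is how the paper concludes as well, but the proof does not close without the logarithmic choice of the mollification scale; the accumulation of losses you worry about in the induction is a secondary concern compared to this base-case failure.
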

Now we show that the very weak solution of the Cauchy problem \eqref{weak_prob_state} is unique in an appropriate
sense. For the formulation of the uniqueness statement, we will use the language of Colombeau algebras.
\begin{defn}\label{uniq_defn} The net $(u_{\varepsilon})_{\varepsilon}$ is $C^{\infty}$-negligible if for all $K\Subset \mathbb{R}$, for all $\alpha\in \mathbb{N}$ and for all $\ell\in \mathbb{N}$ there exists a positive constant $c$ such that
$$\sup_{t\in K}|\partial^{\alpha}u_{\varepsilon}(t)|\leq c\varepsilon^{\ell},$$
for all $\varepsilon \in (0,1]$.
\end{defn}
Actually, in this paper, it is sufficient to take $K=[0,T]$, since the time-dependent distributions can be taken supported in the interval $[0,T]$.

Let us now introduce the Colombeau algebra in the following quotient form:
$$\mathcal{G}(\mathbb{R})=\frac{C^{\infty}-\;\text{moderate\;nets}}{C^{\infty}-\;\text{negligible\;nets}}.$$
We refer to e.g. \cite{Obe92} for the general analysis of $\mathcal{G}(\mathbb{R})$ .
\begin{thm}\label{uniq_exist} (Uniqueness) Let $\G$ be a graded Lie group and let $\R$ be a positive Rockland operator
of homogeneous degree $\nu$. Let $T>0$.
\begin{enumerate}[label=(\roman*)] \item Let $a=a(t)$ be a positive distribution with compact support included in $[0,T]$, such
that $a(t)\geq a_{0}$ for some constant $a_{0}>0$. Let $(u_{0}, u_{1})\in H_{\R}^{s+\frac{\nu}{2}}\times H_{\R}^{s}$ and $f\in \mathcal{G}([0,T];H_{\R}^{s})$ for some $s\in \mathbb{R}$. Then there exists an embedding of the coefficient $a(t)$ into $\mathcal{G}([0,T])$, such that the Cauchy problem \eqref{weak_prob_state} has a unique solution $u\in \mathcal{G}([0,T]; H^{s}_{\R})$.

\item Let $a=a(t)\geq 0$ be a nonnegative distribution with compact support included in $[0,T]$. Let $(u_{0}, u_{1})\in H_{(s)}^{-\infty}$ and $f\in \mathcal{G}([0,T];H_{(s)}^{-\infty})$ for some $s\in \mathbb{R}$. Then there exists an embedding of the coefficient $a(t)$ into $\mathcal{G}([0,T])$, such that the Cauchy problem \eqref{weak_prob_state} has a unique solution $u\in \mathcal{G}([0,T]; H_{(s)}^{-\infty})$.
\end{enumerate}
\end{thm}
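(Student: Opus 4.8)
The plan is to argue by the standard Colombeau-algebra mechanism: existence of a representative solution net follows from Theorem \ref{exist_weak} together with the energy bound of Theorem \ref{Ruzh_Tar_thm_inhom}(i), so the real content of the statement is \emph{uniqueness}, i.e.\ that any two representatives of the embedded data produce solution nets differing by a negligible net. For part (i), let $(u_\varepsilon)_\varepsilon$ and $(\tilde u_\varepsilon)_\varepsilon$ solve \eqref{epsil_weak_prob_state} with representatives $(a_\varepsilon)_\varepsilon,(f_\varepsilon)_\varepsilon$ and $(\tilde a_\varepsilon)_\varepsilon,(\tilde f_\varepsilon)_\varepsilon$ of the same embedded coefficient and source and with identical Cauchy data, so that $(a_\varepsilon-\tilde a_\varepsilon)_\varepsilon$ is $C^\infty$-negligible and $(f_\varepsilon-\tilde f_\varepsilon)_\varepsilon$ is $C^\infty([0,T];H^s_\R)$-negligible. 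Writing $U_\varepsilon:=u_\varepsilon-\tilde u_\varepsilon$ and using $a_\varepsilon\R u_\varepsilon-\tilde a_\varepsilon\R\tilde u_\varepsilon=a_\varepsilon\R U_\varepsilon+(a_\varepsilon-\tilde a_\varepsilon)\R\tilde u_\varepsilon$, one finds that $U_\varepsilon$ solves
\begin{equation*}
\partial_t^2 U_\varepsilon+a_\varepsilon\R U_\varepsilon=g_\varepsilon,\qquad U_\varepsilon(0)=0,\quad\partial_t U_\varepsilon(0)=0,
\end{equation*}
with $g_\varepsilon:=(f_\varepsilon-\tilde f_\varepsilon)-(a_\varepsilon-\tilde a_\varepsilon)\R\tilde u_\varepsilon$. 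Since $(\tilde u_\varepsilon)_\varepsilon$ is moderate and $(a_\varepsilon-\tilde a_\varepsilon)_\varepsilon$ is negligible, the rule ``moderate $\times$ negligible $=$ negligible'' shows $(g_\varepsilon)_\varepsilon$ is negligible. Note also that convolution with a nonnegative mollifier of unit mass preserves the lower bound, so $a_\varepsilon\ge a_0$ (resp.\ $a_\varepsilon\ge0$ in part (ii)).

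Next I would apply the energy estimate underlying Theorem \ref{Ruzh_Tar_thm_inhom}(i) to this difference problem, now tracking the dependence of the constant on $a_\varepsilon$. Via the Plancherel theorem and the spectral resolution of $\R$, the problem decouples into the scalar ODEs $v''+a_\varepsilon(t)\lambda v=\hat g_\varepsilon$ indexed by $\lambda$ in the spectrum of $\R$; for the energy $E(t,\lambda)=|v'|^2+a_\varepsilon(t)\lambda|v|^2$ the self-adjointness of $\R$ cancels the leading terms and leaves $E'(t,\lambda)\le(1+\|a_\varepsilon'\|_{L^\infty}/a_0)E(t,\lambda)+|\hat g_\varepsilon|^2$, so Gronwall gives, uniformly in $\lambda$,
\begin{equation*}
\|U_\varepsilon(t)\|^2_{H^{s+\frac{\nu}{2}}_\R}+\|\partial_t U_\varepsilon(t)\|^2_{H^s_\R}\le C\,e^{(1+\|a_\varepsilon'\|_{L^\infty}/a_0)T}\,\|g_\varepsilon\|^2_{C([0,T];H^s_\R)}.
\end{equation*}
The crucial point -- and the reason the statement only asserts the \emph{existence} of a suitable embedding -- is to choose the mollification scale $\omega(\varepsilon)$ so that $\|a_\varepsilon'\|_{L^\infty}=O(\log(1/\varepsilon))$; then the exponential factor is dominated by a power $\varepsilon^{-M}$ and is moderate. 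Combined with the negligibility of $g_\varepsilon$, this forces $(U_\varepsilon)_\varepsilon$ and (after differentiating the equation and inducting on the order) all its time-derivatives to be negligible, giving uniqueness in $\mathcal{G}([0,T];H^s_\R)$.

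For part (ii) the scheme is identical, but since $a$ is only nonnegative the factor $\|a_\varepsilon'\|_{L^\infty}/a_0$ is unavailable and the elementary energy must be replaced by the quasi-symmetriser / approximate-energy estimates already powering parts (iii)--(iv) of Theorem \ref{Ruzh_Tar_thm_inhom}. For the decoupled ODEs these yield a bound of the form $E(t,\lambda)\lesssim e^{K(\varepsilon)\lambda^{1/(2s)}}E(0,\lambda)+(\text{source})$, where the loss in the spectral parameter is governed by the Gevrey index $s$ constrained as in Theorem \ref{Ruzh_Tar_thm_inhom} and $K(\varepsilon)$ is controlled through the moderate bounds on the derivatives of $a_\varepsilon$. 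The weight $e^{-\eta\R^{1/(2s)}}$ built into the $H_{(s)}^{-\infty}$-norm then absorbs the factor $e^{K(\varepsilon)\lambda^{1/(2s)}}$ for $\eta$ large enough (equivalently, $\omega(\varepsilon)$ slow enough), so applying the estimate to the difference problem with its negligible source again produces a $C^\infty([0,T];H_{(s)}^{-\infty})$-negligible net.

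The main obstacle is precisely the bookkeeping in these two energy estimates: one must exhibit the dependence of the Gronwall, respectively quasi-symmetriser, constants on $\|a_\varepsilon^{(k)}\|_{L^\infty}$ explicitly, and then calibrate the embedding $\omega(\varepsilon)$ -- a logarithmic scale in part (i), and a scale matched to the Gevrey weight $\eta$ in part (ii) -- so that these constants stay moderate. Once this calibration is achieved, the multiplication rule does the rest, and the fact that for smooth $a$ the nets collapse to the genuine solution of \eqref{weak_prob_state} gives consistency with classical and distributional solutions whenever the latter exist.
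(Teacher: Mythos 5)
Your proposal is correct and follows essentially the same route as the paper: reduce to the difference problem with zero data and a negligible right-hand side (using that moderate times negligible is negligible), decouple via the group Fourier transform, run the symmetriser (resp.\ quasi-symmetriser) energy estimate tracking the dependence of the Gronwall constant on the derivatives of $a_{\varepsilon}$, and calibrate $\omega(\varepsilon)$ logarithmically so that this constant stays moderate while the negligible source wins. The only cosmetic difference is that you keep the source term additive in the Gronwall inequality and invoke its negligibility directly, whereas the paper absorbs it into the energy by splitting into the cases $|V_{\varepsilon}|\geq 1$ and $|V_{\varepsilon}|<1$; both lead to the same conclusion.
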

Now we give the consistency result, which means that very weak solutions recapture the classical solutions in the case the latter exist. For instance, we can compare the solution given by Theorem \ref{Ruzh_Tar_thm_inhom} (i) and Part (b) of Theorem \ref{Ruzh_Tar_thm_inhom} (iii) with the very weak solutions in Theorem \ref{exist_weak} under assumptions when Theorem \ref{Ruzh_Tar_thm_inhom} (i) and Part (b) of Theorem \ref{Ruzh_Tar_thm_inhom} (iii) hold.

Denote by $L^{\infty}_{1}([0,T])$ the space of bounded functions on $[0,T]$ with the derivative also in $L^{\infty}$.
\begin{thm} \label{consist_weak} (Consistency-1) Let $\G$ be a graded Lie group and let $\R$ be a positive Rockland operator
of homogeneous degree $\nu$. Let $T>0$.

\begin{enumerate}[label=(\roman*)] \item Let $a\in L^{\infty}_{1}([0,T])$ with $a(t)\geq a_{0}>0$. Let $s\in\mathbb{R}$, $(u_{0}, u_{1})\in H_{\R}^{s+\frac{\nu}{2}}\times H_{\R}^{s}$ and $f\in C([0,T];H_{\R}^{s})$. Let $u$ be a very weak solution of $H^{s}$-type of \eqref{weak_prob_state}. Then for any regularising families $a_{\varepsilon}$ and $f_{\varepsilon}$ in Definition \ref{veryweak_def}, any representative $(u_{\varepsilon})_{\varepsilon}$ of $u$ converges in $C([0,T]; H^{s+\frac{\nu}{2}}_{\R})\cap C^{1}([0,T];H^{s}_{\R})$ as $\varepsilon\rightarrow0$ to the unique classical
solution in $C([0,T]; H^{s+\frac{\nu}{2}}_{\R})\cap C^{1}([0,T];H^{s}_{\R})$ of the Cauchy problem \eqref{weak_prob_state} given by Theorem \ref{Ruzh_Tar_thm_inhom} (i).

\item Let $a\in C^{\ell}([0,T])$ with $\ell\geq 2$ be such that $a(t)\geq 0$. Let $1\leq s<1+\ell/2$ and $u_{0},u_{1}\in H_{(s)}^{-\infty}$ as well as $f\in C([0,T];H_{(s)}^{-\infty})$. Let $u$ be a very weak solution of $H_{(s)}^{-\infty}$-type of \eqref{weak_prob_state}. Then for any regularising families $a_{\varepsilon}$ and $f_{\varepsilon}$ in Definition \ref{veryweak_def}, any representative $(u_{\varepsilon})_{\varepsilon}$ of $u$ converges in $C^{2}([0,T]; H_{(s)}^{-\infty})$ as $\varepsilon\rightarrow0$ to the unique classical
solution in $C^{2}([0,T]; H_{(s)}^{-\infty})$ of the Cauchy problem \eqref{weak_prob_state} given by Part (b) of Theorem \ref{Ruzh_Tar_thm_inhom} (iii).
\end{enumerate}
\end{thm}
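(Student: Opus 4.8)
The plan is to treat both parts by an energy argument for the difference between a representative of the very weak solution and the genuine solution supplied by Theorem \ref{Ruzh_Tar_thm_inhom}. In case (i) let $u$ be the unique classical solution of \eqref{weak_prob_state} in $C([0,T];H_{\R}^{s+\frac{\nu}{2}})\cap C^{1}([0,T];H_{\R}^{s})$ given by Theorem \ref{Ruzh_Tar_thm_inhom}(i) (note $a\in L^\infty_1([0,T])\subset {\rm Lip}([0,T])$ with $a\geq a_0>0$), and in case (ii) let $u$ be the corresponding solution from Part (b) of Theorem \ref{Ruzh_Tar_thm_inhom}(iii). Given any representative $(u_\varepsilon)_\varepsilon$, each $u_\varepsilon$ solves \eqref{epsil_weak_prob_state} with the same data $(u_0,u_1)$ and with $a_\varepsilon=a\ast\psi_{\omega(\varepsilon)}$, $f_\varepsilon=f\ast\psi_{\omega(\varepsilon)}$. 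Subtracting the two equations, the difference $w_\varepsilon:=u_\varepsilon-u$ has vanishing Cauchy data and solves
\[
\partial_t^2 w_\varepsilon + a_\varepsilon \R w_\varepsilon=(f_\varepsilon-f)+(a-a_\varepsilon)\R u,\qquad w_\varepsilon(0)=\partial_t w_\varepsilon(0)=0 .
\]
Everything then reduces, via the a priori inequality \eqref{est_tar1_inhom}, to the decay of the right-hand side.

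Next I would record the convergence of the regularisations, which is where the regularity of $a$ and $f$ enters. In case (i), since $a\in L^\infty_1([0,T])$ we have $a_\varepsilon\geq a_0$ and $\|a_\varepsilon'\|_{L^\infty}\leq\|a'\|_{L^\infty}$ uniformly in $\varepsilon$, together with $\|a-a_\varepsilon\|_{L^\infty([0,T])}\leq\omega(\varepsilon)\|a'\|_{L^\infty}\to0$; and since $f\in C([0,T];H_{\R}^{s})$ is uniformly continuous in $t$, $f_\varepsilon\to f$ in $C([0,T];H_{\R}^{s})$. Consequently $f_\varepsilon-f\to0$ in $C([0,T];H_{\R}^{s})$, while $(a-a_\varepsilon)\R u\to0$ in $C([0,T];H_{\R}^{s-\frac{\nu}{2}})$, because $\R u\in C([0,T];H_{\R}^{s-\frac{\nu}{2}})$ and the scalar factor tends to $0$ uniformly in $t$.

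Applying the energy inequality \eqref{est_tar1_inhom} of Theorem \ref{Ruzh_Tar_thm_inhom}(i) to $w_\varepsilon$, with a constant $C$ uniform in $\varepsilon$ (it depends only on $T$, $a_0$ and $\|a'\|_{L^\infty}$ through $a_\varepsilon$), and noting that the source $(a-a_\varepsilon)\R u$ lies one energy level below the data, I would run the estimate at the shifted level $s-\frac{\nu}{2}$: since the data vanish,
\[
\|w_\varepsilon(t)\|_{H_{\R}^{s}}^{2}+\|\partial_t w_\varepsilon(t)\|_{H_{\R}^{s-\frac{\nu}{2}}}^{2}\leq C\,\big\|(f_\varepsilon-f)+(a-a_\varepsilon)\R u\big\|_{C([0,T];H_{\R}^{s-\frac{\nu}{2}})}^{2}\longrightarrow0 ,
\]
so that $u_\varepsilon\to u$ in $C([0,T];H_{\R}^{s})\cap C^1([0,T];H_{\R}^{s-\frac{\nu}{2}})$. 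To reach the exact space in the statement I would combine this with the uniform bound of $(u_\varepsilon)_\varepsilon$ in $C([0,T];H_{\R}^{s+\frac{\nu}{2}})\cap C^1([0,T];H_{\R}^{s})$, again a consequence of \eqref{est_tar1_inhom} with the uniform constant, using weak-$\ast$ compactness to identify the limit as $u$ (by the uniqueness in Theorem \ref{Ruzh_Tar_thm_inhom}(i)) and then upgrading weak to strong convergence through convergence of the energies. This endpoint, namely recovering the full energy norm rather than the norm one level below, is the main obstacle, precisely because the source term $(a-a_\varepsilon)\R u$ is controlled only in $H_{\R}^{s-\frac{\nu}{2}}$.

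Part (ii) follows the same scheme with the exponential weights $e^{-\eta\R^{\frac{1}{2s}}}$ built into $H_{(s)}^{-\infty}$ in place of the Sobolev norms, using the $C^{2}([0,T];H_{(s)}^{-\infty})$ energy estimate behind Part (b) of Theorem \ref{Ruzh_Tar_thm_inhom}(iii) applied to $w_\varepsilon$. Here the $\frac{\nu}{2}$ loss encountered in (i) is harmless: the inductive-limit spaces $H_{(s)}^{-\infty}$ absorb any finite loss of spatial derivatives into a slightly larger weight $\eta$, so one only needs $f_\varepsilon\to f$ in $C([0,T];H_{(s)}^{-\infty})$ together with $a_\varepsilon\to a$ and the uniform convergence of the derivatives $a_\varepsilon^{(j)}\to a^{(j)}$ for $j\leq\ell$, all guaranteed by $a\in C^\ell([0,T])$ in the range $1\leq s<1+\frac{\ell}{2}$ for which the estimate is valid. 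With the source tending to $0$ in $C^{2}([0,T];H_{(s)}^{-\infty})$, the energy estimate forces $w_\varepsilon\to0$, and uniqueness identifies the limit as the classical solution $u$.
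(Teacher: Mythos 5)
Your overall strategy coincides with the paper's: both form the difference between a representative $(u_{\varepsilon})_{\varepsilon}$ and the classical solution $\widetilde{u}$, observe that this difference solves the Cauchy problem with vanishing data and source $(f_{\varepsilon}-f)+(a-a_{\varepsilon})\R\widetilde{u}$, and conclude via the energy estimate behind \eqref{est_tar1_inhom} (with constants uniform in $\varepsilon$ because $a_{\varepsilon}\geq a_{0}$ and $\|a_{\varepsilon}'\|_{L^{\infty}}\leq\|a'\|_{L^{\infty}}$), together with $\|a-a_{\varepsilon}\|_{L^{\infty}}\to0$ and $f_{\varepsilon}\to f$. You are in fact more careful than the paper on the one delicate point: the paper asserts that $n_{\varepsilon}=(a_{\varepsilon}-a)\R\widetilde{u}$ lies in $C([0,T];H_{\R}^{s})$ and tends to $0$ there, whereas $\widetilde{u}\in C([0,T];H_{\R}^{s+\frac{\nu}{2}})$ only yields $\R\widetilde{u}\in C([0,T];H_{\R}^{s-\frac{\nu}{2}})$ --- exactly the $\tfrac{\nu}{2}$ loss you identify. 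Your treatment of Part (ii), absorbing this loss into a slightly larger weight $\eta$ in $e^{-\eta\R^{\frac{1}{2s}}}$, is correct and is how the paper implicitly proceeds there.

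The one step of yours that does not close as written is the endpoint upgrade in Part (i): identifying the limit by weak-$\ast$ compactness and then \enquote{upgrading weak to strong convergence through convergence of the energies}. Convergence of the $H_{\R}^{s+\frac{\nu}{2}}\times H_{\R}^{s}$ energies is precisely what has to be proved, and it does not follow from the differential identity for $E_{\varepsilon}$ without further work, since the term $(\partial_{t}S_{\varepsilon}V_{\varepsilon},V_{\varepsilon})$ involves $a_{\varepsilon}'$, which converges only weak-$\ast$ in $L^{\infty}$. A repair that does close is frequency truncation: the componentwise estimate \eqref{app_eq2} applied to $u_{\varepsilon}$ itself, with constant uniform in $\varepsilon$ and $\pi$, shows that the contribution of the frequencies $\pi_{m}>N$ to $\|u_{\varepsilon}(t)\|^{2}_{H_{\R}^{s+\frac{\nu}{2}}}+\|\partial_{t}u_{\varepsilon}(t)\|^{2}_{H_{\R}^{s}}$ is controlled by the corresponding tail of the fixed data $(u_{0},u_{1},f)$, hence is small uniformly in $\varepsilon$ and $t$ (and likewise for $\widetilde{u}$); on the remaining region $\pi_{m}\leq N$ the extra factor $\pi_{m}^{\nu}$ carried by the source $(a-a_{\varepsilon})\pi(\R)\widehat{\widetilde{u}}$ is bounded by $N^{\nu}$, so your estimate at the level $s-\tfrac{\nu}{2}$ already gives convergence there. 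Combining the two regions yields convergence in the full norm of $C([0,T];H_{\R}^{s+\frac{\nu}{2}})\cap C^{1}([0,T];H_{\R}^{s})$; with this substitution your argument is complete and agrees in substance with the paper's.
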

Similarly, we can show other consistency \textquote{cases} of Theorem \ref{consist_weak}, corresponding to Part (b) of Theorem \ref{Ruzh_Tar_thm_inhom} (ii) and Part (b) of Theorem \ref{Ruzh_Tar_thm_inhom} (iv):
\begin{thm} \label{consist2_weak} (Consistency-2) Let $\G$ be a graded Lie group and let $\R$ be a positive Rockland operator
of homogeneous degree $\nu$. Let $T>0$.
\begin{enumerate}[label=(\roman*)] \item Let $a(t)\geq a_{0}>0$ and $a\in C^{\alpha}([0,T])$ with $0<\alpha<1$. Let $1\leq s<1+\alpha/(1-\alpha)$, $(u_{0}, u_{1})\in H_{(s)}^{-\infty}$ and $f\in C([0,T];H_{(s)}^{-\infty})$. Let $u$ be a very weak solution of $H_{(s)}^{-\infty}$-type of \eqref{weak_prob_state}. Then for any regularising families $a_{\varepsilon}$ and $f_{\varepsilon}$ in Definition \ref{veryweak_def}, any representative $(u_{\varepsilon})_{\varepsilon}$ of $u$ converges in $C^{2}([0,T]; H_{(s)}^{-\infty})$ as $\varepsilon\rightarrow0$ to the unique classical
solution in $C^{2}([0,T]; H_{(s)}^{-\infty})$ of the Cauchy problem \eqref{weak_prob_state} given by Part (b) of Theorem \ref{Ruzh_Tar_thm_inhom} (ii).

\item Let $a(t)\geq 0$ and $a\in C^{\alpha}([0,T])$ with $0<\alpha<2$. Let $1\leq s<1+\alpha/2$, $u_{0},u_{1}\in H_{(s)}^{-\infty}$ and $f\in C([0,T];H_{(s)}^{-\infty})$. Let $u$ be a very weak solution of $H_{(s)}^{-\infty}$-type of \eqref{weak_prob_state}. Then for any regularising families $a_{\varepsilon}$ and $f_{\varepsilon}$ in Definition \ref{veryweak_def}, any representative $(u_{\varepsilon})_{\varepsilon}$ of $u$ converges in $C^{2}([0,T]; H_{(s)}^{-\infty})$ as $\varepsilon\rightarrow0$ to the unique classical solution in $C^{2}([0,T]; H_{(s)}^{-\infty})$ of the Cauchy problem \eqref{weak_prob_state} given by Part (b) of Theorem \ref{Ruzh_Tar_thm_inhom} (iv).
\end{enumerate}
\end{thm}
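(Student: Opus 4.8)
The plan is to follow the scheme already carried out in Theorem \ref{consist_weak} (Consistency-1), the only genuinely new feature being the H\"older (rather than Lipschitz or $C^{\ell}$) modulus of continuity of $a$ and the correspondingly smaller admissible range of $s$. I would treat both parts in parallel, since they differ only quantitatively: part (i) invokes Part (b) of Theorem \ref{Ruzh_Tar_thm_inhom} (ii) and part (ii) invokes Part (b) of Theorem \ref{Ruzh_Tar_thm_inhom} (iv). In each case the standing hypotheses ($0<\alpha<1$, $a\ge a_{0}>0$, $1\le s<1+\alpha/(1-\alpha)$, respectively $0<\alpha<2$, $a\ge 0$, $1\le s<1+\alpha/2$) are exactly those under which that theorem supplies a unique classical solution $u\in C^{2}([0,T];H_{(s)}^{-\infty})$ of \eqref{weak_prob_state} with the true coefficient $a$ and source $f$.

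First I would fix this classical solution $u$ and take an arbitrary representative $(u_{\varepsilon})_{\varepsilon}$ of the given very weak solution, so that $u_{\varepsilon}$ solves the regularised problem \eqref{epsil_weak_prob_state} with $a_{\varepsilon}=a\ast\psi_{\omega(\varepsilon)}$ and a $C^{\infty}([0,T];H_{(s)}^{-\infty})$-moderate net $f_{\varepsilon}$. Since both $u$ and $u_{\varepsilon}$ carry the same initial data $u_{0},u_{1}$, subtracting the two equations shows that the difference $U_{\varepsilon}:=u_{\varepsilon}-u$ solves
\begin{equation*}
\partial_{t}^{2}U_{\varepsilon}+a_{\varepsilon}(t)\R U_{\varepsilon}=\underbrace{(f_{\varepsilon}-f)+(a-a_{\varepsilon})\R u}_{=:h_{\varepsilon}},\qquad U_{\varepsilon}(0)=\partial_{t}U_{\varepsilon}(0)=0.
\end{equation*}
The whole matter is thus reduced to proving that $U_{\varepsilon}\to 0$ in $C^{2}([0,T];H_{(s)}^{-\infty})$.

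The decisive step is the a priori estimate. Here I would revisit the (group Fourier / spectral) energy estimates underlying the proof of Part (b) of Theorem \ref{Ruzh_Tar_thm_inhom} (ii) (resp.\ (iv)), applied to the Cauchy problem for $U_{\varepsilon}$ with coefficient $a_{\varepsilon}$ and zero data, and verify that the resulting bound
\begin{equation*}
\sup_{t\in[0,T]}\big\|e^{-\eta\R^{\frac{1}{2s}}}\partial_{t}^{k}U_{\varepsilon}(t)\big\|_{L^{2}(\G)}\le C\sup_{t\in[0,T]}\big\|e^{-\eta_{0}\R^{\frac{1}{2s}}}h_{\varepsilon}(t)\big\|_{L^{2}(\G)},\qquad k=0,1,2,
\end{equation*}
holds with $\eta,\eta_{0},C$ independent of $\varepsilon$. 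This uniformity is the heart of the matter, and I expect it to be the main obstacle: it rests on the facts that mollification does not increase the H\"older seminorm, $[a_{\varepsilon}]_{C^{\alpha}}\le[a]_{C^{\alpha}}$, nor destroy positivity $a_{\varepsilon}\ge a_{0}>0$ (resp.\ nonnegativity $a_{\varepsilon}\ge 0$), so that the constants in the energy estimate, which depend on $a$ only through $[a]_{C^{\alpha}}$, $a_{0}$, $T$ and $s$, together with the terminal Gevrey weight $\eta$ reached on $[0,T]$, may be chosen once and for all. The delicate bookkeeping is to track the loss of Gevrey regularity in the dual scale $H_{(s)}^{-\infty}$ and to check that the admissibility range of $s$ keeps this terminal weight finite, uniformly in $\varepsilon$.

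It then remains to show $h_{\varepsilon}\to 0$ in $C([0,T];H_{(s)}^{-\infty})$, which is routine. For the source term, $f_{\varepsilon}=f\ast\psi_{\omega(\varepsilon)}\to f$ by the standard convergence of Friedrichs mollifications, uniformly in $t$ with values in the fixed weighted space. For the coefficient term, $a\in C^{\alpha}([0,T])$ gives $\|a-a_{\varepsilon}\|_{L^{\infty}}\le C\,\omega(\varepsilon)^{\alpha}\to 0$, while $\R u\in C([0,T];H_{(s)}^{-\infty})$ because $\R$ acts continuously on $H_{(s)}^{-\infty}$; hence the scalar multiple $(a-a_{\varepsilon})\R u\to 0$ in $C([0,T];H_{(s)}^{-\infty})$. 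Feeding this into the uniform estimate yields $U_{\varepsilon}\to 0$, i.e.\ $u_{\varepsilon}\to u$ in $C^{2}([0,T];H_{(s)}^{-\infty})$, which is the asserted consistency; the independence of the limit from the choice of representative and of the families $a_{\varepsilon},f_{\varepsilon}$ follows since the limit is the classical solution furnished by Theorem \ref{Ruzh_Tar_thm_inhom}.
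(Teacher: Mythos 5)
Your proposal is correct and follows essentially the same route as the paper, which itself only sketches this theorem by reference to the proof of Consistency-1: form the difference equation for $u_{\varepsilon}-u$ with zero Cauchy data and a vanishing right-hand side $(f_{\varepsilon}-f)+(a-a_{\varepsilon})\R u$, then apply the (group Fourier side) energy estimates of the H\"older/Gevrey cases uniformly in $\varepsilon$, the uniformity resting exactly on the facts you identify, namely $[a_{\varepsilon}]_{C^{\alpha}}\leq[a]_{C^{\alpha}}$ and preservation of $a_{\varepsilon}\geq a_{0}$ (resp.\ $a_{\varepsilon}\geq 0$) under mollification. Your bookkeeping of the loss of Gevrey weight in $H_{(s)}^{-\infty}$ and the convergence $h_{\varepsilon}\to 0$ matches what the paper's argument for Theorem \ref{consist_weak} would give after the obvious modifications.
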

The organisation of the paper is as follows. In Section \ref{SEC:prelim} we briefly recall the necessary
concepts of the setting of graded groups. The proof of main results are given in Section \ref{SEC:main_weak} for homogeneous Rockland wave equation. Finally, in Section \ref{SEC:appendix} we briefly discuss the differences in the argument in the case of the inhomogeneous Rockland wave equation.
\section{Preliminaries}
\label{SEC:prelim}

In this section let us very briefly recall the necessary notation concerning the setting of graded groups. We refer to Folland and Stein \cite[Chapter 1]{FS-book}, or to the recent exposition in \cite[Chapter 3]{FR16} for a detailed description of the notions of graded and homogeneous nilpotent Lie groups.

A connected simply connected Lie group $\mathbb{G}$ is called a graded Lie group if its Lie algebra $\mathfrak{g}$ has a vector space decomposition
$$\mathfrak{g}=\bigoplus_{\ell=1}^{\infty}\mathfrak{g}_{\ell},$$
where the $\mathfrak{g}_{\ell}$, $\ell=1,2,...,$ are vector subspaces of $\mathfrak{g}$, all but finitely many equal to $\{0\}$, and satisfying
$$[\mathfrak{g}_{\ell},\mathfrak{g}_{\ell'}]\subset \mathfrak{g}_{\ell+\ell'} \;\;\forall \ell, \ell'\in \mathbb{N}.$$

We fix a basis $\{X_{1},\ldots,X_{n}\}$ of a Lie algebra $\mathfrak{g}$ adapted to the gradation. Then, one can obtain points in $\G$ through the exponential mapping $\exp_{\mathbb{G}}:\mathfrak{g}\rightarrow\mathbb{G}$ as
$$x=\exp_{\mathbb{G}}(x_{1}X_{1}+\ldots+x_{n}X_{n}).$$
Let $A$ be a diagonalisable linear operator on $\mathfrak{g}$ with positive eigenvalues. Then, a family of dilations of a Lie algebra $\mathfrak{g}$ is a family of linear mappings of the form
$$D_{r}={\rm Exp}(A \,{\rm ln}r)=\sum_{k=0}^{\infty}
\frac{1}{k!}({\rm ln}(r) A)^{k}.$$
Here, note that $D_{r}$ is a morphism of $\mathfrak{g}$, i.e.
$$\forall X,Y\in \mathfrak{g},\, r>0,\;
[D_{r}X, D_{r}Y]=D_{r}[X,Y],$$
where $[X,Y]:=XY-YX$ is the Lie bracket. Recall that the dilations can be extended through the exponential mapping to $\G$ by
$$D_{r}(x)=rx:=(r^{\nu_{1}}x_{1},\ldots,r^{\nu_{n}}x_{n}), \;\;x=(x_{1},\ldots,x_{n})\in\mathbb{G},\;\;r>0,$$
where $\nu_{1},\ldots,\nu_{n}$ are weights of the dilations. The homogeneous dimension of $\G$ is denoted by
\begin{equation}\label{dil_weight}
Q:={\rm Tr}\, A=\nu_1+\cdots+\nu_n.
\end{equation}
Let $\widehat{\mathbb{G}}$ be the unitary dual of $\mathbb{G}$.
For a representation $\pi\in\widehat{\mathbb{G}}$, let $\mathcal{H}_{\pi}^{\infty}$ be the space of smooth vectors. We say that a left-invariant differential operator $\mathcal{R}$ on $\mathbb{G}$, which is homogeneous of positive degree, is a Rockland operator, if it satisfies the following Rockland condition:

({\bf R}) for every representation $\pi\in\widehat{\mathbb{G}}$, except for the trivial representation, the operator $\pi(\R)$ is injective on $\mathcal{H}_{\pi}^{\infty}$, i.e.
$$\forall \upsilon \in \mathcal{H}_{\pi}^{\infty}, \;\;\pi(\R)\upsilon=0\Rightarrow \upsilon=0,$$
where $\pi(\R):=d\pi(\R)$ is the infinitesimal representation of $\R$ as of an element of the universal enveloping algebra of $\G$.

For a more detailed discussion of this definition, we refer to \cite[Definition 1.7.4 and Section 4.1.1]{FR16}, that appeared in the work of Rockland \cite{Rockland}. Alternative characterisations of such operators have been obtained by Rockland \cite{Rockland} and Beals \cite{Beals-Rockland}, until the resolution in \cite{HN-79} by Helffer and Nourrigat of the so-called Rockland conjecture, which characterised operators satisfying condition ({\bf R}) as left-invariant homogeneous hypoelliptic differential operators on $\G$.

In this paper we will deal with the Rockland differential operators which are positive in the sense of operators.

We also refer to \cite[Chapter 4]{FR16} for an extensive presentation concerning Rockland operators and their properties, as well as for the consistent development of the corresponding theory of Sobolev spaces. In \cite{CR-CRAS} the corresponding Besov spaces on graded Lie groups and their properties are investigated. Spectral properties of the infinitesimal representations of Rockland operators have been analysed in \cite{tER:97}. For the pseudo-differential calculus on graded Lie groups, we refer to \cite{FR:graded} and \cite{FR16}.

Let $\pi$ be a representation of $\G$ on the separable Hilbert space $\mathcal{H}_{\pi}$. We say that a vector $v\in \mathcal{H}_{\pi}$ is a smooth or of type $C^{\infty}$ if the function
$$\G\ni x \mapsto \pi(x)v\in \mathcal{H}_{\pi}$$
is of class $C^{\infty}$. Let $\mathcal{H}_{\pi}^{\infty}$ be the space of all smooth vectors of a representation $\pi$. Let $\pi$ be a strongly continuous representation of $\G$ on a Hilbert space $\mathcal{H}_{\pi}$. For every $X\in \mathfrak g$ and $v\in \mathcal{H}_{\pi}^{\infty}$ we denote
$$d\pi(X)v:=\lim_{t\rightarrow 0}\frac{1}{t}\left(\pi(\exp_{\G}(tX))v-v\right).$$
Then $d\pi$ is a representation of $\mathfrak{g}$ on $\mathcal{H}_{\pi}^{\infty}$ (see e.g. \cite[Proposition 1.7.3]{FR16}), that is, the
infinitesimal representation associated to $\pi$. By abuse of notation, we will often write $\pi$ instead of $d\pi$, therefore, we write $\pi(X)$ instead of $d\pi(X)$ for any $X\in\mathfrak{g}$.

By the Poincar\'{e}-Birkhoff-Witt theorem, any left-invariant differential operator $T$ on $\G$ can be written in a unique way as a finite sum
\begin{equation}\label{PBW_for}
T=\sum_{|\alpha|\leq M}c_{\alpha}X^{\alpha},
\end{equation}
which allows us to look at $T$ as an element of the universal enveloping algebra $\mathfrak{U}(\mathfrak g)$ of $\mathfrak g$, where all but finitely many of the coefficients $c_{\alpha}\in \mathbb{C}$ are zero and $X^{\alpha}=X_{1}\cdots X_{|\alpha|},$ with $X_{j}\in \mathfrak{g}$. Therefore, the family of infinitesimal representations $\{\pi(T),\pi \in \widehat{\G}\}$ yields a field of operators that turns to be the symbol associated with $T$.

Let $\pi\in \widehat{\G}$ and let $\R$ be a positive Rockland operator of homogeneous degree $\nu>0$. Then, from \eqref{PBW_for} we obtain the following infinitesimal representation of $\R$ associated to $\pi$,
$$\pi(\R)=\sum_{[\alpha]=\nu}c_{\alpha}\pi(X)^{\alpha},$$
where $\pi(X)^{\alpha}=\pi(X^{\alpha})=\pi(X_{1}^{\alpha_{1}}\cdots X_{n}^{\alpha_{n}})$ and $[\alpha]=\nu_{1}\alpha_{1}+\cdots+\nu_{n}\alpha_{n}$ is the homogeneous degree of the multiindex $\alpha$, with $X_{j}$ being homogeneous of degree $\nu_{j}$.

Recall that $\R$ and $\pi(\R)$ are densely defined on $\mathcal{D}(\G)\subset L^{2}(\G)$ and $\mathcal{H}_{\pi}^{\infty}\subset \mathcal{H}_{\pi}$, respectively (see e.g. \cite[Proposition 4.1.15]{FR16}). Let us denote the self-adjoint extension of $\R$ on $L^{2}(\G)$ by $\R_{2}$ and keep the same notation $\pi(\R)$ for the self-adjoint extensions on $\mathcal{H}_{\pi}$ of the infinitesimal representations.

By the spectral theorem for unbounded operators \cite[Theorem VIII.6]{RS80}, we write
$$\R_{2}=\int_{\mathbb{R}}\lambda dE(\lambda)\;\;\text{and}\;\;\pi(\R)=\int_{\mathbb{R}}\lambda dE_{\pi}(\lambda),$$
where $E$ and $E_{\pi}$ are the spectral measures corresponding to $\R_{2}$ and $\pi(\R)$.

Moreover, for any $f\in L^{2}(\G)$ one has
\begin{equation}\label{prel_for_1}
\mathcal{F} (\phi(\R)f)(\pi)=\phi(\pi(\R))\widehat{f}(\pi),
\end{equation}
for any measurable bounded function $\phi$ on the real line $\mathbb{R}$ (see e.g. \cite[Corollary 4.1.16]{FR16}). Note that the
infinitesimal representations $\pi(\R)$ of a positive Rockland operator $\R$ are also positive, because of the relations between their spectral measures. In \cite{HJL85} Hulanicki, Jenkins and Ludwig showed that the spectrum of $\pi(\R)$, with $\pi \in \widehat{\G}\backslash \{1\}$, is discrete and lies in $(0,\infty)$, which allows us to choose an orthonormal basis for $\mathcal{H}_{\pi}$ such that the infinite matrix associated to the self-adjoint operator $\pi(\R)$ has the following form
\begin{equation}\label{pi_R_matrix}
\pi(\R)=\begin{pmatrix} \pi_{1}^{2} & 0 & \ldots &\ldots \\
0 & \pi_{2}^{2} & 0 & \ldots\\
\vdots & 0 & \ddots &\\
\vdots & \vdots &  & \ddots \end{pmatrix},
\end{equation}
where $\pi\in \widehat{\G}\backslash \{1\}$ and $\pi_{j}\in \mathbb{R}_{>0}$.

Now, since we will also deal with the Fourier transform on $\G$, let us briefly recall it.

As usual we identify irreducible unitary representations with their equivalence classes. For $f\in L^{1}(\G)$ and $\pi \in \widehat{G}$, the group Fourier transform of $f$ at $\pi$ is defined by
$$\mathcal{F}_{\G}f(\pi)\equiv \widehat{f}(\pi)\equiv \pi(f):=\int_{\G}f(x)\pi(x)^{*}dx,$$
with integration against the biinvariant Haar measure on $\G$. It implies a linear mapping $\widehat{f}(\pi)$ from the Hilbert space $\mathcal{H}_{\pi}$ to itself that can be represented by an infinite matrix once we choose a basis for $\mathcal{H}_{\pi}$. Consequently, we have
$$\mathcal{F}_{\G}(\R f)(\pi)=\pi(\R)\widehat{f}(\pi).$$
In the sequel, when we write $\widehat{f}(\pi)_{m,k}$, we will be using the same basis in the representation space $\mathcal{H}_{\pi}$ as the one giving \eqref{pi_R_matrix}.

By Kirillov's orbit method (see e.g. \cite{CG90}), we know that the Plancherel measure $\mu$ on the dual $\widehat{\G}$ can be constructed explicitly. In particular, this means that we can have the Fourier inversion formula. Furthermore, the operator $\pi(f)=\widehat{f}(\pi)$ is Hilbert-Schmidt, i.e.
$$\|\pi(f)\|^{2}_{{\rm HS}}={\rm Tr}(\pi(f)\pi(f)^{*})<\infty,$$
and the function $\widehat{\G}\ni \pi \mapsto \|\pi(f)\|^{2}_{{\rm HS}}$ is integrable with respect to $\mu$. Moreover, the Plancherel formula holds (see e.g. \cite{CG90} or \cite{FR16}):
\begin{equation}\label{planch_for}
\int_{\G}|f(x)|^{2}dx=\int_{\widehat{\G}}\|\pi(f)\|^{2}_{{\rm HS}}d\mu(\pi).
\end{equation}
\section{Proofs of main results}
\label{SEC:main_weak}
In this section we prove main results of the paper when $f\equiv0$, and in the case when $f\not\equiv 0$ we refer to Section \ref{SEC:appendix} for the differences in the argument in this case.

First, we need to prove the following result:
\begin{lem}\label{lem_useful} A functional $u$ belongs to $H_{s}^{-\infty}$ if and only if for any positive $\delta>0$ there exists a positive constant $C_{\delta}>0$ such that
$$|\widehat{u}(\pi)_{m,k}|\leq C_{\delta}e^{\delta \pi_{m}^{\frac{1}{s}}}$$
holds for all $\pi \in \widehat {\G}$ and any $m,k\in \mathbb{N}$, where $\pi_{m}$ are strictly positive real numbers from \eqref{pi_R_matrix}. Similarly, $u\in H_{(s)}^{-\infty}$ holds if and only if there exist positive constants $\eta>0$ and $C>0$ such that
$$|\widehat{u}(\pi)_{m,k}|\leq Ce^{\eta \pi_{m}^{\frac{1}{s}}}$$
holds for all $\pi \in \widehat {\G}$, and any $m,k\in \mathbb{N}$.
\end{lem}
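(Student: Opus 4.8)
The plan is to transfer the whole statement to the Fourier side and read it off from the Plancherel formula \eqref{planch_for} together with the diagonal form \eqref{pi_R_matrix} of $\pi(\R)$. Since $\pi(\R)=\mathrm{diag}(\pi_1^2,\pi_2^2,\dots)$, the functional calculus \eqref{prel_for_1} gives $\pi\big(e^{A\R^{1/(2s)}}f\big)=\mathrm{diag}\big(e^{A\pi_m^{1/s}}\big)\widehat f(\pi)$, so that by \eqref{planch_for}
$$\big\|e^{A\R^{1/(2s)}}f\big\|_{L^2(\G)}^2=\int_{\widehat\G}\sum_{m,k}e^{2A\pi_m^{1/s}}\,|\widehat f(\pi)_{m,k}|^2\,d\mu(\pi).$$
Hence, by \eqref{Gevrey_rou} and \eqref{Gevrey_beu}, $f\in\mathcal{G}_{\R}^{s}(\G)$ iff this quantity is finite for \emph{some} $A>0$, while $f\in\mathcal{G}_{\R}^{(s)}(\G)$ iff it is finite for \emph{all} $A>0$. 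I would then write the action of a functional $u$ through the Parseval identity $\langle u,f\rangle=\int_{\widehat\G}\Tr\big(\widehat u(\pi)\,\widehat f(\pi)^{*}\big)\,d\mu(\pi)=\int_{\widehat\G}\sum_{m,k}\widehat u(\pi)_{m,k}\,\overline{\widehat f(\pi)_{m,k}}\,d\mu(\pi)$, so that membership of $u$ in the dual space becomes a statement about which fields $\{\widehat u(\pi)\}$ pair continuously with all such $f$.

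For the Roumieu space $\mathcal{G}_{\R}^{s}$ I would use that it is the inductive limit over $A>0$ of the weighted Hilbert spaces above, so that $u\in H_{s}^{-\infty}$ iff $u$ is continuous on each of them; that is, for every $A>0$ there is $C_A$ with $|\langle u,f\rangle|\le C_A\|e^{A\R^{1/(2s)}}f\|_{L^2}$. For sufficiency, assuming the coefficient bound I fix $A$, choose $\delta<A$ in $|\widehat u(\pi)_{m,k}|\le C_\delta e^{\delta\pi_m^{1/s}}$, and dominate the Parseval pairing; the gain $e^{(\delta-A)\pi_m^{1/s}}$ together with a Sobolev/trace estimate $\int_{\widehat\G}\sum_m(1+\pi_m^2)^{-r}\,d\mu(\pi)<\infty$ for $r$ large absorbs both the Plancherel measure and the column index $k$, giving the required continuity. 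For necessity, I would test $u$ against functions whose group Fourier transform is concentrated near a chosen $\pi$ and at the entry $(m,k)$, with Gevrey norm carrying the factor $e^{A\pi_m^{1/s}}$; the continuity estimate for that $A$ then yields $|\widehat u(\pi)_{m,k}|\le C_A e^{A\pi_m^{1/s}}$, and since $A>0$ is arbitrary this is exactly the asserted bound with $\delta=A$.

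The Beurling case is the same argument with the quantifiers swapped: $\mathcal{G}_{\R}^{(s)}$ is the projective limit over $A>0$, so its dual $H_{(s)}^{-\infty}$ is the corresponding inductive limit, continuity of $u$ holds with respect to a single seminorm, i.e. for some $\eta>0$, and this produces the uniform bound $|\widehat u(\pi)_{m,k}|\le C e^{\eta\pi_m^{1/s}}$. The main obstacle I anticipate is not the duality bookkeeping but matching the functional-analytic (weighted-$L^2$, Plancherel) formulation with the pointwise-in-$(\pi,m,k)$ coefficient estimate: one must control the column index $k$ uniformly and handle the continuous Plancherel measure $\mu$, for which the trace/Sobolev estimates on $\widehat\G$ and an almost-everywhere interpretation of the concentrated test functions are the key technical ingredients.
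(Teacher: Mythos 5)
Your route is sound in outline and ends at the same Fourier‑side description, but it is a genuinely longer path than the one the paper takes. The paper's proof is essentially two lines: it starts from the characterisation $u\in H_{s}^{-\infty}\Leftrightarrow\forall\delta>0:\ \|e^{-\delta \R^{1/(2s)}}u\|_{L^{2}(\G)}<\infty$ (and $u\in H_{(s)}^{-\infty}\Leftrightarrow\exists\eta>0:\ \|e^{-\eta \R^{1/(2s)}}u\|_{L^{2}(\G)}<\infty$), i.e.\ the weighted‑$L^{2}$ description of these ultradistribution spaces supplied by the functional calculus for $\R$ (cf.\ \cite{RT17a}, \cite{Tar18}), and then applies Plancherel \eqref{planch_for} together with the diagonal form \eqref{pi_R_matrix} to rewrite that norm as $\int_{\widehat{\G}}\sum_{m,k}|e^{-\delta\pi_{m}^{1/s}}\widehat{u}(\pi)_{m,k}|^{2}\,d\mu(\pi)$, from which the coefficient bounds are read off. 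What you propose is, in effect, a from‑scratch derivation of that starting characterisation via the Parseval pairing and the inductive/projective limit structure of $\mathcal{G}_{\R}^{s}$ and $\mathcal{G}_{\R}^{(s)}$; your quantifier bookkeeping (continuity on every weighted space for Roumieu, on a single one for Beurling) is correct, and the payoff is self‑containedness. The cost is that all the delicate analysis sits in steps the paper never needs: the necessity direction via test functions \enquote{concentrated near a chosen $\pi$} is problematic because $\mu$ is a continuous measure, so $\widehat{u}(\pi)_{m,k}$ is only defined for $\mu$‑a.e.\ $\pi$ and no admissible $f\in\mathcal{G}_{\R}^{s}$ has Fourier transform supported at a single representation; and in the sufficiency direction the sum over the column index $k$ of the $k$‑independent bound $C_{\delta}e^{\delta\pi_{m}^{1/s}}$ diverges, so your Cauchy--Schwarz against the weighted Hilbert--Schmidt norm of $\widehat{f}$ does not close without reinterpreting the pointwise bound in an $L^{2}$/Hilbert--Schmidt sense --- a looseness that is, to be fair, already present in the statement of the lemma and is silently absorbed by the paper's weighted‑norm formulation. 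If you replace your duality computation by a citation of the $e^{\mp\delta\R^{1/(2s)}}$‑characterisation, your argument collapses to the paper's.
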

\begin{proof}[Proof of Lemma \ref{lem_useful}] Using Plancherel's identity \eqref{planch_for}, we can characterise the Gevrey Roumieu ultradistributions $H_{s}^{-\infty}$ and the Gevrey Beurling ultradistributions $H_{(s)}^{-\infty}$ by
\begin{equation*}
\begin{split}
u\in H_{s}^{-\infty}\Leftrightarrow \forall \delta>0:
\|e^{-\delta \R^{\frac{1}{2s}}}u\|^{2}_{L^{2}(\G)}&=\int_{\widehat{\G}}\|e^{-\delta\pi(\R)^{\frac{1}{2s}}}\widehat{u}(\pi)\|^{2}_{{\rm HS}}d\mu(\pi)
\\&=
\int_{\widehat{\G}}\sum_{m,k}|e^{-\delta \pi_{m}^{\frac{1}{s}}}\widehat{u}(\pi)_{m,k}|^{2}d\mu(\pi)
<\infty,
\end{split}
\end{equation*}
and
\begin{equation*}
\begin{split}
u\in H_{(s)}^{-\infty}\Leftrightarrow \exists \eta>0:
\|e^{-\eta \R^{\frac{1}{2s}}}u\|^{2}_{L^{2}(\G)}&=\int_{\widehat{\G}}\|e^{-\eta\pi(\R)^{\frac{1}{2s}}}\widehat{u}(\pi)\|^{2}_{{\rm HS}}d\mu(\pi)
\\&=
\int_{\widehat{\G}}\sum_{m,k}|e^{-\eta \pi_{m}^{\frac{1}{s}}}\widehat{u}(\pi)_{m,k}|^{2}d\mu(\pi)
<\infty,
\end{split}
\end{equation*}
respectively.
\end{proof}
We prove Theorem \ref{Ruzh_Tar_thm_inhom} (i) in Section \ref{SEC:appendix}. Now let us prove Part (b) of Theorem \ref{Ruzh_Tar_thm_inhom} (ii), (iii) and (iv).
\begin{proof}[Proof of Part (b) of Theorem \ref{Ruzh_Tar_thm_inhom} (ii), (iii) and (iv)] Since the way of deriving Parts (b) of Theorem \ref{Ruzh_Tar_thm_inhom} (ii), (iii), (iv) from Parts (ii), (iii), (iv) of Theorem \ref{Ruzh_Tar_thm}, respectively, is similar, let us show it only for Part (b) of Theorem \ref{Ruzh_Tar_thm_inhom} (iii), which will be useful in investigating the weak solution of \eqref{intro_eq1}. Recall the characterisation of $H_{(s)}^{-\infty}$. Since $u_{0},u_{1}\in H_{(s)}^{-\infty}$ and by Lemma \ref{lem_useful} we see that there exist positive constants $A_{1}$ and $C_{1}$ such that
\begin{equation}\label{eq_main1}
\begin{split}
&\left|e^{-A_{1}\pi_{m}^{\frac{1}{s}}}\widehat{u_{0}}(\pi)_{m,k}\right|\leq C_{1},\\
&\left|e^{-A_{1}\pi_{m}^{\frac{1}{s}}}\widehat{u_{1}}(\pi)_{m,k}\right|\leq C_{1}
\end{split}
\end{equation}
for all $m,k\in \mathbb{N}$. By the proof of \cite[Case 3 of Theorem 1.1, Page 20]{RT17a}, we know that there exist positive constants $C$ and $K$ such that
\begin{equation}\label{Tar_for1}
|\pi_{m}\widehat{u}(t,\pi)_{m,k}|^{2}\leq Ce^{K^{\prime}\pi_{m}^{\frac{1}{s}}}(|\widehat{u}_{0}(\pi)_{m,k}|^{2}+|\widehat{u}_{1}(\pi)_{m,k}|^{2})
\end{equation}
for $1\leq s<\sigma=1+\ell/2$ and some $K^{\prime}>0$ small enough, and all $m,k\in \mathbb{N}$, where $\pi_{m}$ are strictly positive real numbers from \eqref{pi_R_matrix}.

Putting \eqref{eq_main1} into \eqref{Tar_for1} we obtain that for all $t\in [0,T]$ there exist positive constants $C_{2}$ and $A_{2}$ such that
$$\left|e^{-A_{2}\pi_{m}^{\frac{1}{s}}}\widehat{u}(t,\pi)_{m,k}\right|^{2}\leq C_{2},$$
which implies that there exist positive constants $A$ and $C>0$ such that
$$|\widehat{u}(t,\pi)_{m,k}|\leq Ce^{A\pi_{m}^{\frac{1}{s}}},$$
that is, $u(t,\cdot)\in H^{-\infty}_{(s)}$ provided that
$$1\leq s<\sigma=1+\frac{\ell}{2}.$$
This completes the proof of Part (b) of Theorem \ref{Ruzh_Tar_thm_inhom} (iii).

Similarly, Parts (ii) and (iv) of Theorem \ref{Ruzh_Tar_thm} imply Part (b) of Theorem \ref{Ruzh_Tar_thm_inhom} (ii) and Part (b) of Theorem \ref{Ruzh_Tar_thm_inhom} (iv), respectively.
\end{proof}

\begin{proof}[Proof of Theorem \ref{exist_weak}] (i) Assume that the coefficient $a=a(t)$ is a distribution with compact support contained in $[0,T]$. Then, we note that the formulation of \eqref{weak_prob_state} might be impossible in the distributional sense due to issues related to the product of distributions. Therefore, we replace \eqref{weak_prob_state} with a regularised equation. Namely, if we regularise the coefficient $a$ by a convolution with a mollifier in $C^{\infty}_{0}(\mathbb{R})$, then we get nets of smooth functions as coefficients. For this, we take $\psi \in C^{\infty}_{0}(\mathbb{R})$, $\psi\geq 0$ with $\int\psi=1$, and $\omega(\varepsilon)>0$ such that $\omega(\varepsilon) \rightarrow0$ as $\varepsilon\rightarrow 0$ to be chosen later. Then, we define $\psi_{\omega_{\varepsilon}}$ and $a_{\varepsilon}$ by
$$\psi_{\omega_{\varepsilon}}(t):=\frac{1}{\omega(\varepsilon)}\psi\left(\frac{t}{\omega(\varepsilon)}\right)$$
and
$$a_{\varepsilon}(t):=(a\ast\psi_{\omega(\varepsilon)})(t)$$
for all $t\in [0,T]$, respectively. Using these representations of $\psi_{\omega_{\varepsilon}}$ and $a_{\varepsilon}$ and identifying the measure $a(t)$ with its density, we get
\begin{equation*}
\begin{split}
a_{\varepsilon}(t)&=(a\ast \psi_{\omega(\varepsilon)})(t)=\int_{\mathbb{R}}a(t-\tau)\psi_{\omega(\varepsilon)}(\tau)d\tau=
\int_{\mathbb{R}}a(t-\omega(\varepsilon)\tau)\psi(\tau)d\tau\\&
=\int_{K}a(t-\omega(\varepsilon)\tau)\psi(\tau)d\tau\geq a_{0}\int_{K}\psi(\tau)d\tau:=\widetilde{a_{0}}>0,
\end{split}
\end{equation*}
where we have used that $a(t)$ is a positive distribution with compact support (hence a Radon measure) and $\psi \in C^{\infty}_{0}(\mathbb{R})$, ${\rm supp}\;\psi\subset K$, $\psi\geq0$ in above. Here, note that $\widetilde{a_{0}}$ does not depend on $\varepsilon$.

We also note that by virtue of the structure theorem for compactly supported distributions, there exist a natural number $L$ and positive constant $c$ such that for all $k\in \mathbb{N}_{0}$ and $t\in[0,T]$ we have
\begin{equation}\label{for_weak_eq1}
|\partial_{t}^{k}a_{\varepsilon}(t)|\leq c(\omega(\varepsilon))^{-L-k}.
\end{equation}
Thus, $a_{\varepsilon}$ is $C^{\infty}$-moderate regularisation of the coefficient $a(t)$ under appropriate conditions on $\omega(\varepsilon)$, then fixing $\varepsilon \in (0,1]$ we consider the following regularised problem
\begin{equation}\label{for_weak_eq2}
\left\{
                \begin{array}{ll}
                  \partial_{t}^{2}u_{\varepsilon}(t)+a_{\varepsilon}(t)\R u_{\varepsilon}(t)=0, \;t\in [0,T],\\
                  u_{\varepsilon}(0)=u_{0}\in L^{2}(\G),\\
                  \partial_{t}u_{\varepsilon}(0)=u_{1}\in L^{2}(\G),
                \end{array}
              \right.
\end{equation}
where $(u_{0},u_{1})\in H_{\R}^{s+\frac{\nu}{2}}\times H_{\R}^{s}$, $a_{\varepsilon}\in C^{\infty}[0,T]$. Then, Theorem \ref{Ruzh_Tar_thm} (i) implies that the regularised problem \eqref{for_weak_eq2} has a unique solution in the space $C([0,T];H_{\R}^{s+\frac{\nu}{2}})\cap C^{1}([0,T];H_{\R}^{s})$. Actually, noting $a_{\varepsilon}\in C^{\infty}([0,T])$ and differentiating both sides of the equation \eqref{for_weak_eq2} in $t$ inductively, one can see that this unique solution is from $C^{\infty}([0,T];H_{\R}^{s})$.

Since we will use some arguments of the proof of Theorem \ref{Ruzh_Tar_thm} (i), we refer to {\cite[the proof of Case 1 of Theorem 1.1]{RT17a}} or the proof of Theorem \ref{Ruzh_Tar_thm_inhom} (i) in Section \ref{SEC:appendix}, which is an inhomogeneous version of Theorem \ref{Ruzh_Tar_thm} (i) (i.e. an inhomogeneous version of {\cite[Theorem 1.1]{RT17a}}).

Thus, recalling
\begin{equation*}S(t):=\left(
                   \begin{array}{cc}
                     2a(t) & 0 \\
                     0 & 2 \\
                   \end{array}
                 \right),
\end{equation*}
by the proof of {\cite[Case 1 of Theorem 1.1]{RT17a}} (or Theorem \ref{Ruzh_Tar_thm_inhom} (i) with $f\equiv0$), and noting \eqref{for_weak_eq1}, we get
$$\|\partial_{t}S(t)\|\leq C|\partial_{t}a_{\varepsilon}(t)|\leq C\omega(\varepsilon)^{-L-1}.$$
Then, {\cite[Formula (3.8)]{RT17a}} (or \eqref{estimating_E(t)} with $f\equiv0$), Gronwall's lemma and {\cite[the first formula formula after Formula (3.9)]{RT17a}}, {\cite[Formulae (4.4)-(4.6)]{RT17a}} (or \eqref{V_estim}-\eqref{app_eq4} with $f\equiv0$) imply that
\begin{multline}\label{for_weak_eq3}
    \|u_{\varepsilon}(t,\cdot)\|^{2}_{ H^{s+\frac{\nu}{2}}_{\R}(\G)}+\|\partial_{t} u_{\varepsilon}(t,\cdot)\|^{2}_{ H^{s}_{\R}(\G)}\\
\leq C\exp(c\omega(\varepsilon)^{-L-1}T)(\|u_{0}\|^{2}_{H^{s+\frac{\nu}{2}}_{\R}(\G)}+\|u_{1}\|^{2}_{H^{s}_{\R}(\G)}).
    \end{multline}
If we take $(\omega(\varepsilon))^{-L-1}\approx \log \varepsilon$, then \eqref{for_weak_eq3} becomes
$$\|u_{\varepsilon}(t,\cdot)\|^{2}_{ H^{s+\frac{\nu}{2}}_{\R}(\G)}+\|\partial_{t} u_{\varepsilon}(t,\cdot)\|^{2}_{ H^{s}_{\R}(\G)}
\leq C\varepsilon^{-L-1}(\|u_{0}\|^{2}_{H^{s+\frac{\nu}{2}}_{\R}(\G)}+\|u_{1}\|^{2}_{H^{s}_{\R}(\G)}),$$
with possibly new constant $L$.

Now, to obtain that $u_{\varepsilon}$ is $C^{\infty}([0,T];H_{\R}^{s})$-moderate, we need to show that for all $t\in[0,T]$ and $\varepsilon\in (0,1]$,
\begin{equation}\label{need_to_prov}
\|\partial_{t}u_{\varepsilon}(t,\cdot)\|_{H_{\R}^{s}}\leq C\varepsilon^{-L-1},
\|u_{\varepsilon}(t,\cdot)\|_{H_{\R}^{s+\frac{\nu}{2}}}\leq C\varepsilon^{-L}
\end{equation}
hold for some $L>0$. Indeed, once we prove this, then acting by the iterations of $\partial_{t}$ on the equality
$$\partial_{t}^{2}u_{\varepsilon}(t)=-a_{\varepsilon}(t)\R u_{\varepsilon}(t),$$
and taking it in $L^{2}(\G)$-norms, we conclude that $u_{\varepsilon}$ is $C^{\infty}([0,T];H_{\R}^{s})$-moderate. In order to show \eqref{need_to_prov}, we apply {\cite[Formula (3.6)]{RT17a}} and {\cite[the first formula formula afer Formula (3.9)]{RT17a}} (or \eqref{two_side_est_E_2} and \eqref{V_estim} with $f\equiv0$) to $u_{\varepsilon}$, and then by the properties of $a_{\varepsilon}$, we arrive at
$$\pi_{m}^{2}|\widehat{u}_{\varepsilon}(t,\pi)_{m,k}|^{2}+|\partial_{t}\widehat{u}_{\varepsilon}(t,\pi)_{m,k}|^{2}\\
\leq C \varepsilon^{-L-1}(\pi_{m}^{2}|\widehat{u}_{0}(\pi)_{m,k}|^{2}+|\widehat{u}_{1}(\pi)_{m,k}|^{2})
$$
for all $t\in[0,T]$, $\pi\in \widehat{G}$, $m,k\in \mathbb{N}$ and for some $L>0$, where the constant $C$ does not depend on $\pi$. Thus, multiplying this by appropriate powers of $\pi_{m}$ we obtain \eqref{need_to_prov}.

Since $u_{\varepsilon}$ is $C^{\infty}([0,T];H_{\R}^{s})$-moderate, by the Definition \ref{veryweak_def} we conclude that the Cauchy problem \eqref{weak_prob_state} has a very weak solution.

Now we prove Part (ii). Similarly as in Part (i), in this case one get that for $a_{\varepsilon}(t)\geq0$ there are $L\in \mathbb{N}$ and $c_{1}>0$ such that
\begin{equation}\label{for_weak_eq6} |\partial_{t}^{k}a_{\varepsilon}(t)|\leq c_{1}(\omega(\varepsilon))^{-L-k},
\end{equation}
for all $k\in \mathbb{N}_{0}$ and $t\in [0,T]$, which means that $a_{\varepsilon}(t)$ is a $C^{\infty}$-moderate regularisation of $a(t)$. Then, fixing $\varepsilon \in (0,1]$, we consider the following regularised problem
\begin{equation}\label{for_weak_eq7}
\left\{
                \begin{array}{ll}
                  \partial_{t}^{2}u_{\varepsilon}(t)+a_{\varepsilon}(t)\R u_{\varepsilon}(t)=0, \;t\in [0,T],\\
                  u_{\varepsilon}(0)=u_{0}\in L^{2}(\G),\\
                  \partial_{t}u_{\varepsilon}(0)=u_{1}\in L^{2}(\G),
                \end{array}
              \right.
\end{equation}
where $u_{0},u_{1}\in H_{(s)}^{-\infty}$ and $a_{\varepsilon}\in C^{\infty}[0,T]$. Then, we can use Part (b) of Theorem \ref{Ruzh_Tar_thm_inhom} (iii), which implies that the equation \eqref{for_weak_eq7} has a unique solution in the space $u\in C^{2}([0,T];H_{(s)}^{-\infty})$ for any $s$. Actually, this unique solution is from $C^{\infty}([0,T];H_{(s)}^{-\infty})$, which can be checked by differentiating both sides of the equation \eqref{for_weak_eq7} in $t$ inductively noting that $a_{\varepsilon}\in C^{\infty}([0,T])$. Applying Part (b) of Theorem \ref{Ruzh_Tar_thm_inhom} (iii) to the equation \eqref{for_weak_eq7}, using the inequality
$$|\partial_{t}a_{\varepsilon}(t)|\leq C(\omega(\varepsilon))^{-L-1},$$
we have
\begin{equation}\label{for_weak_eq8}
|\pi_{m}\widehat{u}_{\varepsilon}(t,\pi)_{m,k}|^{2}+|\partial_{t}\widehat{u}_{\varepsilon}(t,\pi)_{m,k}|^{2}\leq Ce^{K^{\prime}(\omega(\varepsilon))^{-L-1}\pi_{m}^{\frac{1}{s}}}(|\widehat{u}_{0}(\pi)_{m,k}|^{2}+|\widehat{u}_{1}(\pi)_{m,k}|^{2})
\end{equation}
for all $m,k\in \mathbb{N}$. Taking $\omega^{-1}(\varepsilon)\approx(\log \varepsilon)^{r}$ for an appropriate $r$, and repeating as in the proof of Part (b) of Theorem \ref{Ruzh_Tar_thm_inhom} (iii), from \eqref{for_weak_eq8} we obtain that there exists a positive $\eta$ and, for $p=0,1$ there are $c_{p},N_{p}>0$ such that
\begin{equation}\label{for_weak_eq9}
\|e^{-\eta \R^{\frac{1}{2s}}}\partial_{t}^{p}u_{\varepsilon}(t,\cdot)\|_{L^{2}(\G)}\leq c_{p}\varepsilon^{-N_{p}-p},
\end{equation}
for all $t\in [0,T]$ and $\varepsilon\in (0,1]$. Now, in order to prove that \eqref{for_weak_eq9} holds for all $p\in \mathbb{N}$, we use the following equality
$$\partial_{t}^{2}u_{\varepsilon}(t)=-a_{\varepsilon}(t)\R u_{\varepsilon}(t).$$
Namely, acting by the iterations of $\partial_{t}$ and taking into account the properties of $a_{\varepsilon}(t)$, from \eqref{for_weak_eq9}, we conclude that $u_{\varepsilon}$ is $C^{\infty}([0,T];H_{(s)}^{-\infty})$-moderate.

This completes the proof of Theorem \ref{exist_weak}.
\end{proof}
Now we prove Theorem \ref{uniq_exist}.
\begin{proof}[Proof of Theorem \ref{uniq_exist}] (i) We assume that the Cauchy problem has another solution $v\in\mathcal{G}([0,T];H_{\R}^{s})$. At the level of representatives this means
\begin{equation}\label{for_weak_eq_uniq1}
\left\{
                \begin{array}{ll}
                  \partial_{t}^{2}(u_{\varepsilon}-v_{\varepsilon})(t)+a_{\varepsilon}(t)\R (u_{\varepsilon}-v_{\varepsilon})(t)=\rho_{\varepsilon}(t), \;t\in [0,T],\\
                  (u_{\varepsilon}-v_{\varepsilon})(0)=0,\\
                  (\partial_{t}u_{\varepsilon}-\partial_{t}v_{\varepsilon})(0)=0,
                \end{array}
              \right.
\end{equation}
with $\rho_{\varepsilon}=(\widetilde{a_{\varepsilon}}(t)-a_{\varepsilon}(t))\R v_{\varepsilon}(t)$, where $(\widetilde{a_{\varepsilon}})_{\varepsilon}$ is an approximation corresponding to $v_{\varepsilon}$. Since $(a_{\varepsilon})_{\varepsilon}\sim(\widetilde{a_{\varepsilon}})_{\varepsilon}$, we have that $\rho_{\varepsilon}$ is $C^{\infty}([0,T];H_{\R}^{s})$-negligible. Let us write this as in the following first order system
$$\partial_{t}\begin{pmatrix} W_{1,\varepsilon} \\ W_{2,\varepsilon} \end{pmatrix}
=\begin{pmatrix} 0 & i\R^{1/2}\\ ia_{\varepsilon}(t)\R^{1/2} & 0 \end{pmatrix}
\begin{pmatrix} W_{1,\varepsilon} \\ W_{2,\varepsilon} \end{pmatrix}+\begin{pmatrix} 0 \\ \rho_{\varepsilon} \end{pmatrix},
$$
where
$$W_{1,\varepsilon}:=i\R^{1/2}(u_{\varepsilon}-v_{\varepsilon})\;\;\text{and}\;\;W_{2,\varepsilon}:=\partial_{t}(u_{\varepsilon}-v_{\varepsilon}).$$
This system will be studied after the group Fourier transform, as a system of the type
$$\partial_{t}V_{\varepsilon}(t,\pi)_{m,k}=i\pi_{m}A_{\varepsilon}(t,\pi)_{m,k}V_{\varepsilon}(t,\pi)_{m,k}+P_{\varepsilon}(t,\pi)_{m,k},$$
for all $\pi \in \widehat{\G}$, and any $m,k\in \mathbb{N}$, where $P_{\varepsilon}(t,\pi)_{m,k}=\begin{pmatrix} 0 \\ \mathcal (F_{\G}\rho_{\varepsilon})_{m,k} \end{pmatrix}$, $A_{\varepsilon}(t,\pi)_{m,k}=\begin{pmatrix} 0 & 1\\ a_{\varepsilon}(t) & 0 \end{pmatrix},$ and $V_{\varepsilon}(0,\pi)_{m,k}=\begin{pmatrix} 0 \\ 0 \end{pmatrix}$. We define the energy
$$E_{\varepsilon}(t,\pi)_{m,k}:=(S_{\varepsilon}(t,\pi)_{m,k}V_{\varepsilon}(t,\pi)_{m,k},V_{\varepsilon}(t,\pi)_{m,k})$$
for the symmetriser $S_{\varepsilon}(t,\pi)_{m,k}=\begin{pmatrix} a_{\varepsilon}(t) & 0\\ 0 & 1 \end{pmatrix}$. Since $a_{\varepsilon}(t)$ is continuous, then from the definition of the energy we get
\begin{equation}\label{est_ener_uniq}
c_{0}|V_{\varepsilon}(t,\pi)_{m,k}|^{2}\leq E_{\varepsilon}(t,\pi)_{m,k} \leq c_{1}|V_{\varepsilon}(t,\pi)_{m,k}|^{2}
\end{equation}
for some positive constants $c_{0}$ and $c_{1}$. Then, a direct calculation gives that
\begin{equation}\label{formual_1_uniq_11}
\begin{split}
\partial_{t}&E_{\varepsilon}(t,\pi)_{m,k}\\&
=(\partial_{t}S_{\varepsilon}(t,\pi)_{m,k}V_{\varepsilon}(t,\pi)_{m,k},V_{\varepsilon}(t,\pi)_{m,k})
+(S_{\varepsilon}(t,\pi)_{m,k}\partial_{t}V_{\varepsilon}(t,\pi)_{m,k},V_{\varepsilon}(t,\pi)_{m,k})\\&
+(S_{\varepsilon}(t,\pi)_{m,k}V_{\varepsilon}(t,\pi)_{m,k},\partial_{t}V_{\varepsilon}(t,\pi)_{m,k})\\&
=(\partial_{t}S_{\varepsilon}(t,\pi)_{m,k}V_{\varepsilon}(t,\pi)_{m,k},V_{\varepsilon}(t,\pi)_{m,k})\\&
+i\pi_{m}(S_{\varepsilon}(t,\pi)_{m,k}A_{\varepsilon}(t,\pi)_{m,k}V_{\varepsilon}(t,\pi)_{m,k},V_{\varepsilon}(t,\pi)_{m,k})\\&
-i\pi_{m}(S_{\varepsilon}(t,\pi)_{m,k}V_{\varepsilon}(t,\pi)_{m,k},A_{\varepsilon}(t,\pi)_{m,k}V_{\varepsilon}(t,\pi)_{m,k})\\&
+(S_{\varepsilon}(t,\pi)_{m,k}P_{\varepsilon}(t,\pi)_{m,k},V_{\varepsilon}(t,\pi)_{m,k})
+(S_{\varepsilon}(t,\pi)_{m,k}V_{\varepsilon}(t,\pi)_{m,k},P_{\varepsilon}(t,\pi)_{m,k})\\&
=(\partial_{t}S_{\varepsilon}(t,\pi)_{m,k}V_{\varepsilon}(t,\pi)_{m,k},V_{\varepsilon}(t,\pi)_{m,k})
\\&+i\pi_{m}((S_{\varepsilon}A_{\varepsilon}-A^{*}_{\varepsilon}S_{\varepsilon})(t,\pi)_{m,k}V_{\varepsilon}(t,\pi)_{m,k},
V_{\varepsilon}(t,\pi)_{m,k})\\&
+(S_{\varepsilon}(t,\pi)_{m,k}P_{\varepsilon}(t,\pi)_{m,k},V_{\varepsilon}(t,\pi)_{m,k})
+(V_{\varepsilon}(t,\pi)_{m,k},S_{\varepsilon}(t,\pi)_{m,k}P_{\varepsilon}(t,\pi)_{m,k})\\&
=(\partial_{t}S_{\varepsilon}(t,\pi)_{m,k}V_{\varepsilon}(t,\pi)_{m,k},V_{\varepsilon}(t,\pi)_{m,k})+
2{\rm Re}(S_{\varepsilon}(t,\pi)_{m,k}P_{\varepsilon}(t,\pi)_{m,k},V_{\varepsilon}(t,\pi)_{m,k})\\&
\leq \|\partial_{t}S_{\varepsilon}\||V_{\varepsilon}(t,\pi)_{m,k}|^{2}+2\|S_{\varepsilon}\||P_{\varepsilon}(t,\pi)_{m,k}|
|V_{\varepsilon}(t,\pi)_{m,k}|,
\end{split}
\end{equation}
where we have used $(S_{\varepsilon}A_{\varepsilon}-A^{*}_{\varepsilon}S_{\varepsilon})(t,\pi)_{m,k}=0$.
In the case when $|V_{\varepsilon}(t,\pi)_{m,k}|\geq 1$, taking into account \eqref{est_ener_uniq} we obtain from above that
\begin{equation}\label{for_weak_eq_uniq2}
\begin{split}
\partial_{t}E_{\varepsilon}(t,\pi)_{m,k}&\leq \|\partial_{t}S_{\varepsilon}\||V_{\varepsilon}(t,\pi)_{m,k}|^{2}
+2\|S_{\varepsilon}\||P_{\varepsilon}(t,\pi)_{m,k}|
|V_{\varepsilon}(t,\pi)_{m,k}|\\&
\leq (\|\partial_{t}S_{\varepsilon}\|+2\|S_{\varepsilon}\||P_{\varepsilon}(t,\pi)_{m,k}|)|V_{\varepsilon}(t,\pi)_{m,k}|^{2}\\&
\leq (|\partial_{t}a_{\varepsilon}(t)|+2|a_{\varepsilon}(t)||P_{\varepsilon}(t,\pi)_{m,k}|)|V_{\varepsilon}(t,\pi)_{m,k}|^{2}\\&
\leq c(\omega(\varepsilon))^{-L-1}E_{\varepsilon}(t,\pi)_{m,k}
\end{split}
\end{equation}
for some constant $c>0$. Then, the Gronwall lemma implies that
$$E_{\varepsilon}(t,\pi)_{m,k}\leq\exp(c(\omega(\varepsilon))^{-L-1}T)E_{\varepsilon}(0,\pi)_{m,k}$$
for all $T>0$.
Hence, by \eqref{est_ener_uniq} we obtain for the constant $c_{1}$ independent of $t\in [0,T]$ and $\pi$ that
\begin{equation*}
\begin{split}
c_{0}|V_{\varepsilon}(t,\pi)_{m,k}|^{2}\leq E_{\varepsilon}(t,\pi)_{m,k}&\leq \exp(c(\omega(\varepsilon))^{-L-1}T)E_{\varepsilon}(0,\pi)_{m,k}\\&
\leq \exp(c_{1}(\omega(\varepsilon))^{-L-1}T)|V_{\varepsilon}(0,\pi)_{m,k}|^{2}.
\end{split}
\end{equation*}
Choosing $(\omega(\varepsilon))^{-L-1}\approx\log\varepsilon$, we get
$$|V_{\varepsilon}(t,\pi)_{m,k}|^{2}\leq c\varepsilon^{-L-1}|V_{\varepsilon}(0,\pi)_{m,k}|^{2}$$
for some positive constant $c$ and some (new) $L$. It implies for all $\pi$, and any $m,k\in \mathbb{N}$ and $t\in [0,T]$ that
$$|V_{\varepsilon}(t,\pi)_{m,k}|\equiv0,$$
since $|V_{\varepsilon}(0,\pi)_{m,k}|=0$.

Now let us consider the case $|V_{\varepsilon}(t,\pi)_{m,k}|<1$. Assume that $$|V_{\varepsilon}(t,\pi)_{m,k}|\geq c(\omega(\varepsilon))^{\alpha}$$ for some constant $c$ and $\alpha>0$, i.e.
\begin{equation}\label{Vless1_trick}
\frac{1}{|V_{\varepsilon}(t,\pi)_{m,k}|}\leq C(\omega(\varepsilon))^{-\alpha}.
\end{equation}
In this case, from \eqref{Vless1_trick} noting $$|V_{\varepsilon}(t,\pi)_{m,k}|=\frac{|V_{\varepsilon}(t,\pi)_{m,k}|^{2}}{|V_{\varepsilon}(t,\pi)_{m,k}|}\leq C(\omega(\varepsilon))^{-\alpha}|V_{\varepsilon}(t,\pi)_{m,k}|^{2}$$ and \eqref{est_ener_uniq}, we get from \eqref{formual_1_uniq_11} the following energy estimate
$$\partial_{t}E_{\varepsilon}(t,\pi)_{m,k}\leq C(\omega(\varepsilon))^{-L_{1}}E_{\varepsilon}(t,\pi)_{m,k},$$
where $L_{1}=L+\max\{1,\alpha\}$. Again applying the Gronwall lemma, we arrive at
$$|V_{\varepsilon}(t,\pi)_{m,k}|^{2}\leq \exp(C^{\prime}(\omega(\varepsilon))^{-L_{1}}T)|V_{\varepsilon}(0,\pi)_{m,k}|^{2}.$$
Then, taking $(\omega(\varepsilon))^{-L_{1}}\approx\log\varepsilon$, it follows that
$$|V_{\varepsilon}(t,\pi)_{m,k}|^{2}\leq c^{\prime}\varepsilon^{-L_{1}}|V_{\varepsilon}(0,\pi)_{m,k}|^{2}$$
for some $c^{\prime}$ and some (new) $L_{1}$, which implies
$$|V_{\varepsilon}(t,\pi)_{m,k}|\equiv0$$
for all $\pi$ and $t\in [0,T]$, since we have $|V_{\varepsilon}(0,\pi)_{m,k}|=0$.

The case $|V_{\varepsilon}(t,\pi)_{m,k}|\leq c(\omega(\varepsilon))^{\alpha}$ for some constant $c$ and $\alpha>0$ is trivial. Thus, the first part is proved.

(ii) We prove this part in the similar way as Part (i) but using the quasi-symmetrisers. We assume that the Cauchy problem has another solution \\ $v\in \mathcal{G}([0,T];H_{(s)}^{-\infty})$. At the level of representatives this means that
\begin{equation}\label{for_weak_eq_uniq3}
\left\{
                \begin{array}{ll}
                  \partial_{t}^{2}(u_{\varepsilon}-v_{\varepsilon})(t)+a_{\varepsilon}(t)\R (u_{\varepsilon}-v_{\varepsilon})(t)=\rho_{\varepsilon}(t), \;t\in [0,T],\\
                  (u_{\varepsilon}-v_{\varepsilon})(0)=0,\\
                  (\partial_{t}u_{\varepsilon}-\partial_{t}v_{\varepsilon})(0)=0,
                \end{array}
              \right.
\end{equation}
where $\rho_{\varepsilon}$ is $C^{\infty}([0,T];H_{(s)}^{-\infty})$-negligible. We now write this as in the following first order system
$$\partial_{t}\begin{pmatrix} W_{1,\varepsilon} \\ W_{2,\varepsilon} \end{pmatrix}
=\begin{pmatrix} 0 & i\R^{1/2}\\ ia_{\varepsilon}(t)\R^{1/2} & 0 \end{pmatrix}
\begin{pmatrix} W_{1,\varepsilon} \\ W_{2,\varepsilon} \end{pmatrix}+\begin{pmatrix} 0 \\ \rho_{\varepsilon} \end{pmatrix},
$$
where
$$W_{1,\varepsilon}=i\R^{1/2}(u_{\varepsilon}-v_{\varepsilon})\;\;\text{and}\;\;W_{2,\varepsilon}=\partial_{t}(u_{\varepsilon}-v_{\varepsilon}).$$
This system will be studied after the group Fourier transform, as a system of the type
$$\partial_{t}V_{\varepsilon}(t,\pi)_{m,k}=i\pi_{m}A_{\varepsilon}(t,\pi)_{m,k}V_{\varepsilon}(t,\pi)_{m,k}+P_{\varepsilon}(t,\pi)_{m,k},$$
for all $\pi \in \widehat{\G}$, and any $m,k\in \mathbb{N}$, where $P_{\varepsilon}(t,\pi)_{m,k}=\begin{pmatrix} 0 \\ \mathcal (F_{\G}\rho_{\varepsilon})_{m,k} \end{pmatrix}$, $A_{\varepsilon}(t,\pi)_{m,k}=\begin{pmatrix} 0 & 1\\ a_{\varepsilon}(t) & 0 \end{pmatrix},$ and $V_{\varepsilon}(0,\pi)_{m,k}=\begin{pmatrix} 0 \\ 0 \end{pmatrix}$.

In this case, we define the energy as
$$E_{\varepsilon}(t,\pi,\delta)_{m,k}:=
(Q_{\varepsilon}(t,\delta)_{m,k}V_{\varepsilon}(t,\pi)_{m,k},V_{\varepsilon}(t,\pi)_{m,k}),$$
where
$$Q_{\varepsilon}(t,\pi)_{m,k}:=\begin{pmatrix} a_{\varepsilon}(t) & 0\\ 0 & 1 \end{pmatrix}+
\delta^{2}\begin{pmatrix} 1 & 0\\ 0 & 0 \end{pmatrix},$$
is the quasi-symmetriser. Then, we have
\begin{multline}\label{last_ineq_Et}
\partial_{t}E_{\varepsilon}(t,\pi,\delta)_{m,k}\\
=(\partial_{t}Q_{\varepsilon}(t,\delta)_{m,k}V_{\varepsilon}(t,\pi)_{m,k},V_{\varepsilon}(t,\pi)_{m,k})
+i\pi_{m}((Q_{\varepsilon}A-A^{*}Q_{\varepsilon})(t)V,V)\\
+2{\rm Re}(Q_{\varepsilon}(t,\delta)_{m,k}P_{\varepsilon}(t,\pi)_{m,k},V_{\varepsilon}(t,\pi)_{m,k}).
\end{multline}
Taking into account the properties in the proof of {\cite[Case 3 of Theorem 1.1]{RT17a}} and continuing
to discuss as in the first part, from \eqref{last_ineq_Et} we conclude that the Cauchy
problem \eqref{weak_prob_state} has a unique solution $u\in \mathcal{G}([0,T];H_{(s)}^{-\infty})$ for all $s\in\mathbb{R}$.

This completes the proof of Theorem \ref{uniq_exist}.
\end{proof}
Now we prove Theorem \ref{consist_weak}
\begin{proof}[Proof of Theorem \ref{consist_weak}] (i) Here, we compare the classical solution $\widetilde{u}$ given by Theorem \ref{Ruzh_Tar_thm} (i) with the very weak solution $u$ provided by Theorem \ref{consist_weak}. By the definition of the classical solution we have for the classical solution $\widetilde{u}$ that
\begin{equation}\label{for_weak_eq_consist1}
\left\{
                \begin{array}{ll}
                  \partial_{t}^{2}\widetilde{u}(t)+a(t)\R \widetilde{u}(t)=0, \;t\in [0,T],\\
                  \widetilde{u}(0)=u_{0}\in L^{2}(\G),\\
                  \partial_{t}\widetilde{u}(0)=u_{1}\in L^{2}(\G).
                \end{array}
              \right.
\end{equation}
From the definition of the very weak solution $u$, we also know that there exists a representative $(u_{\varepsilon})_{\varepsilon}$ of $u$ such that
\begin{equation}\label{for_weak_eq_consist2}
\left\{
                \begin{array}{ll}
                  \partial_{t}^{2}u_{\varepsilon}(t)+a_{\varepsilon}(t)\R u_{\varepsilon}(t)=0, \;t\in [0,T],\\
                  u_{\varepsilon}(0)=u_{0}\in L^{2}(\G),\\
                  \partial_{t}u_{\varepsilon}(0)=u_{1}\in L^{2}(\G),
                \end{array}
              \right.
\end{equation}
for suitable embeddings of $a(t)$. Since $(a_{\varepsilon}-a)_{\varepsilon}\rightarrow0$ in $C([0,T])$ for $a\in L^{\infty}_{1}([0,T])$, then \eqref{for_weak_eq_consist1} becomes
\begin{equation}\label{for_weak_eq_consist3}
\left\{
                \begin{array}{ll}
                  \partial_{t}^{2}\widetilde{u}(t)+a_{\varepsilon}(t)\R \widetilde{u}(t)=n_{\varepsilon}(t), \;t\in [0,T],\\
                  \widetilde{u}(0)=u_{0}\in L^{2}(\G),\\
                  \partial_{t}\widetilde{u}(0)=u_{1}\in L^{2}(\G),
                \end{array}
              \right.
\end{equation}
where $n_{\varepsilon}(t)=(a_{\varepsilon}(t)-a(t))\R \widetilde{u}(t)\in C([0,T];H_{\R}^{s})$ and converges to $0$ in this space as $\varepsilon\rightarrow0$. By virtue of \eqref{for_weak_eq_consist2} and \eqref{for_weak_eq_consist3} we note that $\widetilde{u}-u_{\varepsilon}$ solves the following Cauchy problem
\begin{equation}\label{for_weak_eq_consist4}
\left\{
                \begin{array}{ll}
                  \partial_{t}^{2}(\widetilde{u}-u_{\varepsilon})(t)+a_{\varepsilon}(t)\R (\widetilde{u}-u_{\varepsilon})(t)=n_{\varepsilon}(t), \;t\in [0,T],\\
                  (\widetilde{u}-u_{\varepsilon})(0)=0,\\
                  (\partial_{t}\widetilde{u}-\partial_{t}u_{\varepsilon})(0)=0.
                \end{array}
              \right.
\end{equation}
Then, similarly as in the proof of Theorem \ref{uniq_exist}, we reduce above to a system and apply the group Fourier transform to get the following energy estimate
$$\partial_{t}E_{\varepsilon}(t,\pi)_{m,k}\leq |\partial_{t}a_{\varepsilon}(t)||(\widetilde{V}-V_{\varepsilon})(t,\pi)_{m,k}|^{2}+2|a_{\varepsilon}(t)|
|n_{\varepsilon}(t,\pi)_{m,k}||(\widetilde{V}-V_{\varepsilon})(t,\pi)_{m,k}|$$
for all $m,k\in \mathbb{N}$, which implies
$$\partial_{t}E_{\varepsilon}(t,\pi)_{m,k}\leq c_{1}|(\widetilde{V}-V_{\varepsilon})(t,\pi)_{m,k}|^{2}+c_{2}
|n_{\varepsilon}(t,\pi)_{m,k}||(\widetilde{V}-V_{\varepsilon})(t,\pi)_{m,k}|,$$
since the coefficient $a_{\varepsilon}(t)$ is regular enough. Then, noting $|(\widetilde{V}-V_{\varepsilon})(0,\pi)_{m,k}|=0$ and $n_{\varepsilon}\rightarrow 0$ in $C([0,T];H_{\R}^{s})$ and continuing to discussing as in Theorem \ref{uniq_exist} we arrive at
$$|(\widetilde{V}-V_{\varepsilon})(t,\pi)_{m,k}|\leq c(\omega(\varepsilon))^{\alpha}$$
for some positive constants $c$ and $\alpha$, which concludes that $u_{\varepsilon}\rightarrow \widetilde{u}$ in $C([0,T];H_{\R}^{s+\frac{\nu}{2}})\cap C^{1}([0,T];H_{\R}^{s})$. Furthermore, since any other representative of $u$ will differ from $(u_{\varepsilon})_{\varepsilon}$ by a $C^{\infty}([0,T];H_{\R}^{s})$ - negligible net, the limit is the same for any representative of $u$.

(ii) The Part (ii) can be proven as Part (i) with slight modifications.

This completes the proof of Theorem \ref{consist_weak}.
\end{proof}
\section{Appendix: Inhomogeneous equation case}
\label{SEC:appendix}
In this section we are going to give brief ideas for how to deal with the inhomogeneous
wave equation
\begin{equation}\label{not_weak_thm_eq1}
\left\{
                \begin{array}{ll}
                  \partial_{t}^{2}u(t)+a(t)\R u(t)=f(t), \;t\in [0,T],\\
                  u(0)=u_{0}\in L^{2}(\G),\\
                  \partial_{t}u(0)=u_{1}\in L^{2}(\G).
                \end{array}
              \right.
\end{equation}
\begin{proof}[Proof of Theorem \ref{Ruzh_Tar_thm_inhom} (i)] Let us take the group Fourier
transform of \eqref{not_weak_thm_eq1} with respect to $x\in\G$ for all $\pi\in\widehat{G}$, that is,
\begin{equation}\label{fou_cauchy1}
\partial_{t}^{2}\widehat {u}(t,\pi)+a(t)\pi(\R)\widehat{u}(t,\pi)=\widehat{f}(t,\pi).
\end{equation}
Taking into account \eqref{pi_R_matrix}, we rewrite the matrix equation \eqref{fou_cauchy1} componentwise as an infinite system of equations of the form
\begin{equation}\label{app_eq1}
\partial_{t}^{2}\widehat {u}(t,\pi)_{m,k}+a(t)\pi_{m}^{2}\widehat{u}(t,\pi)_{m,k}=\widehat{f}(t,\pi)_{m,k},
\end{equation}
for all $\pi \in \widehat{\G}$, and any $m,k\in \mathbb{N}$. Now let us decouple the system given by the matrix equation \eqref{fou_cauchy1}. For this, we fix an arbitrary representation $\pi$, and a general entry $(m,k)$ and we treat each equation given by \eqref{app_eq1} individually. If we denote
$$v(t):=\widehat {u}(t,\pi)_{m,k},\;\beta^{2}:=\pi_{m}^{2},\;f(t):=\widehat{f}(t,\pi)_{m,k}$$
and
$$v_{0}:=\widehat {u}_{0}(\pi)_{m,k},\;v_{1}:=\widehat {u}_{1}(\pi)_{m,k},$$
then \eqref{app_eq1} becomes
\begin{equation}\label{second_order}
v''(t)+\beta^{2}a(t)v(t)=f(t).
\end{equation}
Denoting
\begin{equation*}V(t):=\left(
          \begin{array}{cc}
            i\beta v(t) \\
            \partial_{t}v(t)
          \end{array}
        \right), V_{0}:=\left(
          \begin{array}{cc}
            i\beta v_{0} \\
            v_{1},
          \end{array}
        \right),
        F(t):=\left(
          \begin{array}{cc}
            0 \\
            f(t)
          \end{array}
        \right),
\end{equation*}
and the matrix
\begin{equation*}A(t):=\left(
                   \begin{array}{cc}
                     0 & 1 \\
                     a(t) & 0 \\
                   \end{array}
                 \right),
\end{equation*}
we reduce the second order system \eqref{second_order} to the following first order system
\begin{equation}\label{not_weak_prop_1ord}
\left\{
                \begin{array}{ll}
                  V'(t)=i\beta A(t)V(t)+F(t), \;t\in [0,T],\\
                  V(0)=V_{0}.
                \end{array}
              \right.
\end{equation}
Note that the eigenvalues of matrix $A(t)$ are $\pm \sqrt{a(t)}$. Let $S$ be the symmetriser of $A$, that is, the matrix $S$ satisfies
 $$SA-A^{*}S=0,$$
has the form
\begin{equation*}S(t):=\left(
                   \begin{array}{cc}
                     2a(t) & 0 \\
                     0 & 2 \\
                   \end{array}
                 \right).
\end{equation*}
Now let us define the energy as
$$E(t):=(S(t)V(t),V(t)).$$
By a direct calculation we have
\begin{equation}\label{two_side_est_E}
2|V(t)|^{2}\min_{t\in [0,T]}\{a(t),1\}\leq E(t)\leq 2|V(t)|^{2}\max_{t\in [0,T]}\{a(t),1\}.
\end{equation}
Since $a(t)>0$ is continuous, there exist positive constants $a_{0},a_{1}>0$ such that
$$a_{0}=\min_{t\in [0,T]}a(t)\;\;\text{and}\;\;a_{1}=\max_{t\in [0,T]}a(t).$$
Denoting $c_{0}:= 2\min\{a_{0}, 1\}$ and $c_{1}:=2\max\{a_{1}, 1\}$, we rewrite \eqref{two_side_est_E} as
\begin{equation}\label{two_side_est_E_2}
c_{0}|V(t)|^{2}\leq E(t)\leq c_{1}|V(t)|^{2}.
\end{equation}
A direct calculation with \eqref{two_side_est_E_2} implies that
\begin{equation}\label{estimating_E(t)}
\begin{split}
E_{t}&(t)\\&=(S_{t}(t)V(t),V(t))+(S(t)V_{t}(t),V(t))+(S(t)V(t),V_{t}(t))\\&
=(S_{t}(t)V(t),V(t))+i\beta(S(t)A(t)V(t),V(t))+(S(t)F(t),V(t))\\&-i\beta (S(t)V(t),A(t)V(t))+(S(t)V(t),F(t))\\&
=(S_{t}(t)V(t),V(t))+i\beta((S(t)A(t)-A^{*}(t)S(t))V(t),V(t))+2{\rm Re} (S(t)F(t),V(t))\\&
=(S_{t}(t)V(t),V(t))+2{\rm Re} (S(t)F(t),V(t))\\&
\leq (\|S_{t}\|+1)|V(t)|^{2}+\|SF\|^{2}\\&
\leq \max\{\|S_{t}\|+1, \|S\|^{2}\}(|V(t)|^{2}+|F(t)|^{2})\\&
\leq C_{1} E(t)+C_{2}|F(t)|^{2}
\end{split}
\end{equation}
for some positive constants $C_{1}$ and $C_{2}$. Applying Gronwall's lemma and noting \eqref{two_side_est_E_2}, we get
\begin{equation}\label{V_estim}
|V(t)|^{2}\leq c_{0}^{-1}E(t)\leq C_{1}|V_{0}|^{2}+C_{2}\sup_{0\leq t\leq T}|F(t)|^{2},
\end{equation}
for all $t\in[0,T]$ with \enquote{new} constants $C_{1}$ and $C_{2}$ depending on $T$. Recalling the definition of $V(t)$, the last inequality gives
$$\beta^{2}|v(t)|^{2}+|v'(t)|^{2}\leq C (\beta^{2}|v_{0}|^{2}+|v_{1}|^{2}+\sup_{0\leq t\leq T}|f(t)|^{2}),$$
which is equivalent to
\begin{multline}\label{app_eq2}
|\pi_{m}\widehat{u}(t,\pi)_{m,k}|^{2}+|\partial_{t}\widehat{u}(t,\pi)_{m,k}|^{2}\\ \leq
C(|\pi_{m}\widehat {u}_{0}(\pi)_{m,k}|^{2}+|\widehat {u}_{1}(\pi)_{m,k}|^{2}+\sup_{0\leq t\leq T}|\widehat{f}(t,\pi)_{m,k}|^{2}).
\end{multline}
This holds uniformly in $\pi\in\widehat{\G}$ and $m,k\in\mathbb{N}$. We multiply the inequality \eqref{app_eq2} by $\pi_{m}^{4s/\nu}$ to get
\begin{multline}\label{app_eq3}
|\pi_{m}^{1+\frac{2s}{\nu}}\widehat{u}(t,\pi)_{m,k}|^{2}+|\pi_{m}^{\frac{2s}{\nu}}\partial_{t}\widehat{u}(t,\pi)_{m,k}|^{2}\\ \leq
C(|\pi_{m}^{1+\frac{2s}{\nu}}\widehat {u}_{0}(\pi)_{m,k}|^{2}
+|\pi_{m}^{\frac{2s}{\nu}}\widehat {u}_{1}(\pi)_{m,k}|^{2}+\sup_{0\leq t\leq T}|\pi_{m}^{\frac{2s}{\nu}}\widehat{f}(t,\pi)_{m,k}|^{2}).
\end{multline}
Thus, since for any Hilbert-Schmidt operator $A$ one has
$$\|A\|^{2}_{{\rm HS}}=\sum_{m,k}|(A\phi_{m},\phi_{k})|^{2}$$
for any orthonormal basis $\{\phi_{1},\phi_{2},\ldots\}$, then we can consider the infinite sum over $m,k$ of the inequalities provided by \eqref{app_eq3}, to obtain
\begin{multline}\label{app_eq4}
\|\pi(\R)^{\frac{1}{2}+\frac{s}{\nu}}\widehat{u}(t,\pi)\|^{2}_{{\rm HS}}+
\|\pi(\R)^{\frac{s}{\nu}}\partial_{t}\widehat{u}(t,\pi)\|^{2}_{{\rm HS}}\\ \leq
C(\|\pi(\R)^{\frac{1}{2}+\frac{s}{\nu}}\widehat{u}_{0}(\pi)\|^{2}_{{\rm HS}}+
\|\pi(\R)^{\frac{s}{\nu}}\widehat{u}_{1}(\pi)\|^{2}_{{\rm HS}}+
\|\pi(\R)^{\frac{s}{\nu}}\widehat{f}(t,\pi)\|^{2}_{C([0,T];{\rm HS})}).
\end{multline}
Thus, integrating both sides of \eqref{app_eq4} against the Plancherel measure $\mu$ on $\widehat{\G}$, then using the Plancherel identity \eqref{planch_for} we obtain \eqref{est_tar1_inhom}.
\end{proof}
Similarly, Theorem \ref{Ruzh_Tar_thm} (ii), (iii) and (iv) imply Parts (a) of Theorem \ref{Ruzh_Tar_thm_inhom} (ii), (iii) and (iv), respectively. Then, Parts (b) of Theorem \ref{Ruzh_Tar_thm_inhom} (ii), (iii) and (iv) can be proved as in homogeneous cases using Parts (a) of Theorem \ref{Ruzh_Tar_thm_inhom} (ii), (iii) and (iv). In the same way as in the proof of homogeneous cases of Theorems \ref{exist_weak}, \ref{uniq_exist}, \ref{consist_weak} and \ref{consist2_weak}, their inhomogeneous cases can be proven with slight modifications.

\end{document}